\newtheorem{theorem}{Theorem}[section]
\newtheorem{corollary}[theorem]{Corollary}
\newtheorem{lemma}[theorem]{Lemma}
\newtheorem{proposition}[theorem]{Proposition}
\theoremstyle{definition}
\theoremstyle{remark}
\newtheorem{remark}[theorem]{Remark}
\numberwithin{equation}{section}
\newcommand{\N}{\ensuremath{\mathbb{N}}}
\newcommand{\R}{\ensuremath{\mathbb{R}}}
\newcommand{\Z}{\ensuremath{\mathbb{Z}}}
\newcommand{\abs}[1]{\left\vert #1\right\vert}
\newcommand{\set}[1]{\left\{ #1\right\}}
\newcommand{\norm}[1]{\left\|#1\right\|}
\newcommand{\pa}[1]{\left(#1\right)}
\newcommand{\pro}[2]{\left\langle#1|#2\right\rangle}
\renewcommand{\Re}[1]{\mathrm{Re}\left(#1\right)}
\begin{document}

\markboth{J.P. Borgna, M. De Leo, C. S\'anchez de la Vega, D.
Rial}{Lie-Trotter method for abstract semilinear \\ evolution equations.}

\title{Lie-Trotter method for abstract semilinear \\ evolution equations.}

\author{
Juan Pablo Borgna\thanks{Instituto de Ciencias, Universidad Nacional
de General Sarmiento, J.M. Guti\'errez 1150 (1613) Los Polvorines,
Buenos Aires,
Argentina. Email: {\em jpborgna@ungs.edu.ar}}\,,\;%
Mariano De Leo\thanks{Instituto de Ciencias, Universidad Nacional
de General Sarmiento, J.M. Guti\'errez 1150 (1613) Los Polvorines,
Buenos Aires,
Argentina. Email: {\em mdeleo@ungs.edu.ar}}\,,\; \\
Constanza S\'anchez de la Vega\thanks{IMAS - CONICET and Instituto de Humanidad, Universidad Nacional de General Sarmiento, J.M. Guti\'errez 1150 (1613) Los Polvorines, Buenos Aires, Argentina. Email: {\em csfvega@dm.uba.ar}}\,,\;%
Diego Rial\thanks{IMAS - CONICET and Departamento de Matem\'atica, Facultad de
Ciencias Exactas y Naturales, Universidad de Buenos Aires, Ciudad
Universitaria, Pabell\'on I (1428) Buenos Aires, Argentina. Email:
{\em drial@dm.uba.ar}}%
}

\maketitle

\begin{abstract}
In this paper we present a unified picture concerning Lie-Trotter
method for solving a large class of semilinear
problems: nonlinear Schr\"odinger, Schr\"oginger--Poisson,
Gross--Pitaevskii, etc. This picture includes more general schemes
such as Strang and Ruth--Yoshida. The convergence
result is presented in suitable Hilbert spaces related with the
time regularity of the solution and is based on Lipschitz
estimates for the nonlinearity. In addition, with extra
requirements both on the regularity of the initial datum and on
the nonlinearity we show the linear convergence of the method.

\vspace{1cm}

Keywords: Lie-Trotter; splitting integrators; semilinear problems.

\bigskip

AMS Subject Classification: 65M12, 35Q55, 35Q60.

\end{abstract}

\section{Introduction}

Let us consider the semilinear evolution equation

\begin{align} \label{ec-evol}
\begin{cases}
u_{t}+iAu+iB\pa{u}=0,\\
u\pa{0}=u_{0}\in\mathsf{H}_{1},
\end{cases}
\end{align}
where $A$ is a self-adjoint operator
in the Hilbert space $\mathsf{H}_{1}$ and $B:\mathsf{H}_{1} \to
\mathsf{H}_{1}$ is a locally Lipschitz map. Since a large number
of problems fall under this situation, at least we can mention the
nonlinear Schr\"odinger, Schr\"oginger--Poisson, Gross--Pitaevskii
(see \cite{C} for more details), and a large amount of
articles are devoted to the numerical study of time-splitting
methods, most of them concerning Lie-Trotter and Strang schemes,
see \cite{BS,BBD,DT,FGP,G,JL}, we shall present in this
article a unified picture of time-splitting methods. This means
that we shall show general results concerning both the order of
convergence, and the regularity required for initial data. Despite
the fact that we are mainly interested in time discretization,
note that the standard result for Lie--Trotter schemes developed
in the literature expresses that the convergence is globally
linear in the time step, we also take under consideration
discretization in space (see subsection \ref{sub: spectral methods}).
In addition, we also show that under the (weaker) assumptions made
above on the operators the method is well defined and converges in
the smaller space $\mathsf{H}_1$.

To see this we first show how to solve the problem \eqref{ec-evol}
by means of a generic time-splitting scheme. Note that any
solution of \eqref{ec-evol} verifies the fixed point integral
equation
\begin{align}
\label{duhamel}
u\pa{t}={\Phi^{A}\pa{t}}u_{0}-i\int_{0}^{t}\Phi^{A}\pa{t-t'}B\pa{u\pa{t'}}dt',
\end{align}
where $\Phi^{A}$ denotes the strongly continuous one-parameter
unitary group generated by $-iA$, this means that:
$v\pa{t}=\Phi^{A}\pa{t}v_{0}$ is the solution of the linear
problem
\begin{align}
\label{A}
\begin{cases}
v_{t}+iAv=0,\\
v\pa{0}=v_{0}.
\end{cases}
\end{align}

The following well-posedness result of \eqref{duhamel} is
well-known, for proof and details, see \cite{C}.

\begin{proposition}
Let $B$ be a locally Lipschitz map defined on the Hilbert space $\mathsf{H}_{1}$
with $B\pa{0}=0$. Then for any $u_{0}\in\mathsf{H}_{1}$ there
exists $T^*=T^*(u_0)>0$ and a unique solution $u\in
C\pa{[0,T^*(u_0)),\mathsf{H}_{1}}$ of equation \eqref{duhamel}.
Moreover, the map $T^*: \mathsf{H}_{1} \to
C\pa{[0,T^*(u_0)),\mathsf{H}_{1}}$ is lower semicontinuous, and
for any $T<T^*(u_0)$ the map $\mathsf{H}_{1} \mapsto
C\pa{[0,T],\mathsf{H}_{1}}$ given by $u_{0}\mapsto u$ is
continuous, i.e.: given $\varepsilon>0$, there exists $\delta>0$
such that if $\norm{u_{0}-\tilde{u}_{0}}<\delta$ then
$T<T^{*}\pa{\tilde{u}_{0}}$ and
$\norm{u\pa{t}-\tilde{u}\pa{t}}<\varepsilon$ for $t\in[0,T]$,
where $\tilde{u}$ is the solution  of \eqref{duhamel} with
$\tilde{u}\pa{0}=\tilde{u}_{0}$. Finally, is also valid the
blow-up alternative:
\begin{enumerate}
\item $T^{*}(u_0)=\infty$ ($u$ is globally defined).
\item $T^{*}(u_0)<\infty$ and $\lim\limits_{t\uparrow
T^{*}(u_0)}\norm{u\pa{t}}=\infty$.
\end{enumerate}
\end{proposition}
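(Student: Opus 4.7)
The plan is to run a standard Banach fixed point argument on the Duhamel map, and then bootstrap the local statement into the global structure (maximality, continuous dependence, blow-up alternative). Define
\begin{equation*}
\mathcal{F}(u)(t) \;=\; \Phi^{A}(t)u_{0}-i\int_{0}^{t}\Phi^{A}(t-t')B(u(t'))\,dt'
\end{equation*}
on the complete metric space $X_{T,R}=\{u\in C([0,T],\mathsf{H}_{1}):\|u(t)-\Phi^{A}(t)u_{0}\|\le R\}$. Since $\Phi^{A}$ is a unitary group, $\|\Phi^{A}(t)v\|=\|v\|$ for all $t$, so any $u\in X_{T,R}$ is bounded uniformly by $\|u_{0}\|+R$. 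Let $L=L(\|u_{0}\|+R)$ be a local Lipschitz constant for $B$ on the ball of this radius; using $B(0)=0$ we get $\|B(v)\|\le L\|v\|$. A direct estimate gives $\|\mathcal{F}(u)(t)-\Phi^{A}(t)u_{0}\|\le T\,L(\|u_{0}\|+R)$ and $\|\mathcal{F}(u)(t)-\mathcal{F}(w)(t)\|\le TL\,\|u-w\|_{\infty}$. Choosing $T$ small enough that $TL(\|u_{0}\|+R)\le R$ and $TL<1/2$ makes $\mathcal{F}$ a contraction on $X_{T,R}$, yielding a unique local fixed point $u\in C([0,T],\mathsf{H}_{1})$.

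Next, I would define $T^{*}(u_{0})$ as the supremum of times up to which the fixed point can be constructed, gluing together local solutions starting from $u(\tau)$ at each time $\tau$ (this is legal because the equation is autonomous and the local existence time depends only on $\|u(\tau)\|$). Uniqueness on $[0,T^{*})$ follows from the contraction argument applied on short subintervals together with a connectedness argument on the set where two solutions agree.

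For continuous dependence on fixed $[0,T]$ with $T<T^{*}(u_{0})$, I would subtract the Duhamel formulas for $u$ and $\tilde{u}$, use unitarity of $\Phi^{A}$ and the Lipschitz bound for $B$ on a ball containing $\{u(t):t\in[0,T]\}$ (which is compact in $\mathsf{H}_{1}$, hence bounded), and apply Gronwall to obtain
\begin{equation*}
\|u(t)-\tilde{u}(t)\|\;\le\;e^{Lt}\,\|u_{0}-\tilde{u}_{0}\|,
\end{equation*}
valid as long as $\tilde{u}$ remains in that ball; a standard continuation argument shows this holds on all of $[0,T]$ once $\|u_{0}-\tilde{u}_{0}\|$ is small enough, which simultaneously gives lower semicontinuity of $T^{*}$.

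Finally, for the blow-up alternative I would argue by contradiction: if $T^{*}(u_{0})<\infty$ but $\liminf_{t\uparrow T^{*}}\|u(t)\|=M<\infty$, then along a sequence $t_{n}\uparrow T^{*}$ the local existence time provided by the fixed point argument is bounded below by a constant $\tau_{0}=\tau_{0}(M)>0$ independent of $n$. Starting the local solution from $u(t_{n})$ for $t_{n}>T^{*}-\tau_{0}/2$ would extend $u$ past $T^{*}$, contradicting maximality; hence $\|u(t)\|\to\infty$. The main obstacle, and the only delicate point, is making the concatenation/extension step rigorous so that the uniform local existence time $\tau_{0}(M)$ really only depends on the norm bound $M$ of the initial data, which is where the unitarity of $\Phi^{A}$ and the $B(0)=0$ hypothesis are essential.
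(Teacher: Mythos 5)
Your argument is correct and is exactly the standard contraction-mapping/Gronwall proof that the paper itself does not reproduce but delegates to the reference \cite{C} (Cazenave--Haraux): Banach fixed point on the Duhamel map with an existence time depending only on $\norm{u_{0}}_{\mathsf{H}_{1}}$ (via unitarity of $\Phi^{A}$ and $B(0)=0$), gluing to define $T^{*}$, Gronwall for uniqueness and continuous dependence, and the uniform local existence time to rule out bounded finite-time blow-up. The one step you flag as delicate --- that $\tau_{0}$ depends only on the norm bound --- is already secured by your own choice $TL(\norm{u_{0}}+R)\leq R$, $TL<1/2$ with $R$ fixed, so no gap remains.
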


Since $B$ is a locally Lipschitz map, there exists a flow
$\Phi^{B}$, defined locally in time, generated by the problem
\begin{align}
\label{eq: flow B}
\begin{cases}
w_{t}+iB\pa{w}=0,\\
w\pa{0}=w_{0}.
\end{cases}
\end{align}
Let $\Phi$ be the flow of the equation
$-i(A+B)$ defined by $\Phi\pa{t}\pa{u_{0}}=u\pa{t}$, where $u$ is the solution of \eqref{duhamel}.
The idea of time-splitting methods is to approximate $\Phi$, the
exact flow, by combining the exact flows $\Phi^{A}$ and
$\Phi^{B}$, in the following sense: for any (small) time step
$h>0$, the discrete flow is defined by
\[
\Phi_{h}=\Phi^{B}\pa{b_{m}h}\circ\Phi^{A}\pa{a_{m}h}\circ\cdots\circ
\Phi^{B}\pa{b_{1}h}\circ\Phi^{A}\pa{a_{1}h},
\]
where the splitting scheme given by $a_1,\ldots,a_m$, $
b_1,\ldots,b_m$ verifies
$a_{1}+\cdots+a_{m}=b_{1}+\cdots+b_{m}=1$. Let us mention that for
$m=1$ (therefore $a_1=b_1=1$) we get the Lie-Trotter scheme; and
for $m=2$ and $a_1=a_2=1/2, b_1=1, b_2=0$ we get the Strang
scheme. Other Yoshida schemes (see details in \cite{Y}) are
represented similarly.

For fixed $u_0\in \mathsf{H}_{1}$ and $T<T^*(u_0)$, the
convergence result expresses that
$\{u_0,\Phi_{h}(u_0),\ldots,\Phi_{h}^n(u_0)\}$ converges in some
sense to the exact solution at time $t=kh$, i.e.
$\{u_0,\Phi(h)(u_0),\ldots,\Phi(nh)(u_0)\}$, when the time step
$h=T/n$ goes to $0$. We note that the splitting scheme given by
$a_1,\ldots, a_m$ and $b_1,\ldots, b_m$ is performed $n$ times
before reaching the value $t=T$. Clearly, the scaling $t\to T t$
allows us to restrict our attention to the normalized case $T=1$,
and this will be the case in the sequel. We therefore set
$\alpha,\beta$ as the $1$-periodic functions defined by:
\[
\alpha\pa{t}=
\begin{cases}
 2m a_{j}&\text{, if }j-1\leq m\pa{t-[t]}<j-1/2\\
 0&\text{, if }j-1/2\leq m\pa{t-[t]}<j
\end{cases}
\]

\[
\beta\pa{t}=
\begin{cases}
 0&\text{, if }j-1\leq m\pa{t-[t]}<j-1/2\\
 2m b_{j}&\text{, if }j-1/2\leq m\pa{t-[t]}<j.
\end{cases}
\]
It is, then, a straightforward computation to verify that  for
$n\in \mathbb{N}$ and $\alpha_{n}\pa{t}=\alpha\pa{n
t},\beta_{n}\pa{t}=\beta\pa{n t}$, the continuous flow generated
by the (non-autonomous) operator $-i\pa{\alpha_{n}A+\beta_{n}B}$,
denoted by  $\Phi_n$, verifies $\Phi_n(1/n)=\Phi_h$. Therefore,
the convergence (in time) of the splitting scheme is expressed as
$\Phi_n\pa{t}$ converges to $\Phi\pa{t}$ as the time step $h=1/n$
goes to 0. In what follows we shall refer to an {\em abstract
time-splitting method} when we are given a pair of $T$-periodic
functions $\alpha, \beta$.

Finally, we also take into consideration the convergence in space.
It is a common practice to solve the problem \eqref{A} by means of
spectral methods, which consists of solving the problem on a
finite dimensional invariant subspace (generated by eigenfunctions
of the linear operator $A$). Since invariant subspaces of $A$ are
not necessarily $\Phi^{B}$-invariant, the approximated solution is
projected before the application of $\Phi^{A}$; this gives the
(finite dimensional) discrete flow:
\[
\widetilde{\Phi}_{h}=\Phi^{B}\pa{b_{s}h}\circ\Phi^{A}\pa{a_{s}h}\circ P\circ\cdots\circ P\circ
\Phi^{B}\pa{b_{1}h}\circ\Phi^{A}\pa{a_{1}h}\circ P,
\]
where $P$ is the orthogonal projection onto the finite dimensional
invariant subspace.

{In a more general setting, if we take $\widetilde{\Phi}_A$ as an approximation of the exact flow $\Phi_A$, this gives the discrete flow:
\begin{equation} \label{phitilde}
\widetilde{\Phi}_{h}=\Phi^{B}\pa{b_{s}h}\circ\tilde{\Phi}^{A}\pa{a_{s}h}\circ \cdots\circ
\Phi^{B}\pa{b_{1}h}\circ\tilde{\Phi}^{A}\pa{a_{1}h}.
\end{equation}}

\subsection{Notation and Main Results}

Throughout this paper the evolution problem is given by equation \eqref{ec-evol}
\begin{align*}
\begin{cases}
&u_{t}+iAu+iB\pa{u}=0,\\
&u\pa{0}=u_{0},
\end{cases}
\end{align*}
for $u_{0}\in\mathsf{H}_{1}$, where $A$ is a self-adjoint operator
in $\mathsf{H}_{1}$, and $B:\mathsf{H}_{1} \to \mathsf{H}_{1}$ is a locally Lipschitz map. The problem under
consideration is to find the generated flow $\Phi(t)$ in a compact
interval $[0,T]$, where the solution exists.
The abstract time-splitting method to solve the evolution problem
(\ref{ec-evol}) for $t\in [0,T]$, i.e. to get the flow $\Phi(t)$, is
thus described as follows:
\begin{enumerate}
 \item Set $\alpha, \beta \in L^1_{\rm loc}$ T-periodic bounded functions with total integral
 \[\int_0^T \alpha = \int_0^T \beta =1.\]
 \item Fix $n\in \mathbb{N}$ and the step size $h_{n}=T/n$ (the choice $T=1$ shall be used in the sequel).
 \item Set the sequences $\alpha_n(t)=\alpha(nt)$ and $\beta_n(t)=\beta (nt)$.
 \item Get the flow $\Phi_{h_{n}}$ of the non-autonomous
       equation $u_t=-i\pa{\alpha_{n}A+\beta_{n}B}u$.
\end{enumerate}
Under this situation we show:

\setcounter{section}{3}
\setcounter{theorem}{0}
\begin{theorem}[Convergence]
Let $u_0\in \mathsf{H}_{1}$ and $T<T^*(u_0)$, then there
exists $n_0\in \mathbb{N}$ such that for any $n\geq n_0$, the
function $\Phi_n(t) u_0$ is defined for $t\in [0,T]$, and
$\lim\limits_{n\to \infty} \max\limits_{t\in [0,T]}\norm{u\pa{t}-u_{n}\pa{t}} = 0$.
\end{theorem}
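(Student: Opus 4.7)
The plan is to pass from discrete compositions to a continuous Duhamel reformulation, compare $u$ and $u_n$ through the linear group $\Phi^A$, and close the argument by a bootstrap/continuation that controls $\norm{u_n}$ in terms of the known bound on $\norm{u}$. Set $\tau_n(t)=\int_0^t\alpha_n(s)\,ds$. Since $\alpha$ is $1$-periodic with mean $1$ and bounded, $|\tau_n(t)-t|\le C/n$ uniformly on $[0,T]$, and the propagator of $-i\alpha_n(t)A$ from $s$ to $t$ is exactly $\Phi^A(\tau_n(t)-\tau_n(s))$. Variation of constants therefore yields
\begin{equation*}
u_n(t)=\Phi^A(\tau_n(t))u_0-i\int_0^t\beta_n(s)\,\Phi^A(\tau_n(t)-\tau_n(s))\,B(u_n(s))\,ds,
\end{equation*}
which is the correct analogue of \eqref{duhamel} for the splitting flow.

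Fix $R=1+\max_{t\in[0,T]}\norm{u(t)}$, let $L,M$ be Lipschitz and uniform bounds of $B$ on the ball of radius $R$ in $\mathsf{H}_1$, and define $T_n=\sup\{\tau\in[0,T]:u_n\text{ exists on }[0,\tau]\text{ with }\norm{u_n(s)}\le R\}$. On $[0,T_n]$ decompose $u(t)-u_n(t)=\Delta_1+\Delta_2+\Delta_3+\Delta_4$ where
\begin{align*}
\Delta_1&=[\Phi^A(t)-\Phi^A(\tau_n(t))]u_0,\\
\Delta_2&=-i\int_0^t[\Phi^A(t-s)-\Phi^A(\tau_n(t)-\tau_n(s))]B(u(s))\,ds,\\
\Delta_3&=-i\int_0^t(1-\beta_n(s))\,\Phi^A(\tau_n(t)-\tau_n(s))\,B(u(s))\,ds,\\
\Delta_4&=-i\int_0^t\beta_n(s)\,\Phi^A(\tau_n(t)-\tau_n(s))\,[B(u(s))-B(u_n(s))]\,ds.
\end{align*}
Terms $\Delta_1$ and $\Delta_2$ go to zero uniformly because $\Phi^A$ is strongly continuous and the sets $\{u_0\}$ and $\{B(u(s)):s\in[0,T]\}$ are compact in $\mathsf{H}_1$ (the latter by continuity of $s\mapsto u(s)$ and of $B$); the shift in the argument of $\Phi^A$ is $O(1/n)$. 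The Lipschitz bound on $B$ gives $\norm{\Delta_4(t)}\le\norm{\beta}_\infty L\int_0^t\norm{u(s)-u_n(s)}\,ds$, which will be absorbed by Gr\"onwall once the remaining piece is controlled.

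The genuine obstacle is $\Delta_3$: the functions $\beta_n$ converge only weakly-$\ast$ to the constant $1$, so the estimate must exploit the zero-mean cancellation $\int_{k/n}^{(k+1)/n}(\beta_n(s)-1)\,ds=0$. Writing $g_n(s)=\Phi^A(\tau_n(t)-\tau_n(s))B(u(s))$, partition $[0,t]$ into the $n$ subintervals of length $1/n$ (with a boundary piece of measure at most $1/n$ contributing $O(1/n)$) and rewrite each piece as $\int_{k/n}^{(k+1)/n}(\beta_n-1)(g_n(s)-g_n(k/n))\,ds$. The family $\{g_n\}$ is equicontinuous on $[0,T]$: indeed $\tau_n$ is $\norm{\alpha}_\infty$-Lipschitz, $\Phi^A$ is strongly continuous on the compact set $B(u([0,T]))$, and $B\circ u$ is continuous, hence a common modulus of continuity $\omega$ serves for every $n$. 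Thus $\norm{\Delta_3(t)}\le(1+\norm{\beta}_\infty)\,\omega(1/n)\to 0$, uniformly in $t\in[0,T]$.

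Combining these bounds gives $\norm{u(t)-u_n(t)}\le\varepsilon_n+\norm{\beta}_\infty L\int_0^t\norm{u(s)-u_n(s)}\,ds$ on $[0,T_n]$ with $\varepsilon_n\to 0$, so by Gr\"onwall $\max_{[0,T_n]}\norm{u-u_n}\le \varepsilon_n e^{\norm{\beta}_\infty L T}$. For $n$ large enough this is strictly less than $1$, hence $\norm{u_n(t)}\le\norm{u(t)}+1\le R$ on $[0,T_n]$; the strict inequality together with the local existence theorem for \eqref{eq: flow B} and the linearity of $\Phi^A$ shows that $T_n$ cannot be less than $T$. Therefore $T_n=T$ for $n\ge n_0$, and $\max_{t\in[0,T]}\norm{u(t)-u_n(t)}\to 0$.
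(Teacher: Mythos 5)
Your proof is correct, and it follows the same skeleton as the paper's: the Duhamel representation of $u_n$ (the paper's \eqref{duhamel generalizado}, with your $\Phi^{A}\pa{\tau_n(t)-\tau_n(s)}$ being exactly the paper's propagator $\Phi^{A,n}\pa{t,s}$), a four-term decomposition of $u-u_n$ that is algebraically equivalent to the paper's $I_{1,n},\dots,I_{4,n}$, and the same Gr\"onwall-plus-continuation closing argument with the bound $R=1+\max_t\norm{u(t)}$. Where you genuinely diverge is in how the first three terms are sent to zero. The paper builds a small weak-convergence toolkit: Lemma \ref{AA} (Arzel\'a--Ascoli for the primitives $\int_0^t\alpha_n\theta$), Lemma \ref{Le: piecewise constant approximation} (approximation of $v\in C([0,T],\mathsf{H}_1)$ by finite sums $\sum\theta_j z_j$), and Corollaries \ref{Co: weak convergence} and \ref{corolario de Pr: unif convergence}; these handle \emph{arbitrary} sequences $\alpha_n,\beta_n\rightharpoonup 1$ dominated by an $L^1_{\rm loc}$ function, which is the generality in which Theorem \ref{Th: convergence on H1} is actually stated in Section 3.1. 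You instead use compactness of $u_0$ and of $B(u([0,T]))$ in $\mathsf{H}_1$ together with joint continuity of $(h,v)\mapsto(\Phi^A(h)-I)v$ for $\Delta_1,\Delta_2$, and for $\Delta_3$ an explicit period-by-period cancellation $\int_{k/n}^{(k+1)/n}(\beta_n-1)=0$ combined with equicontinuity of $g_n$. This buys a quantitative rate $\omega(1/n)$ and avoids the subsequence/contradiction argument of Lemma \ref{AA}, but it uses the specific structure $\beta_n(t)=\beta(nt)$ with $\beta$ periodic of unit mean; it would not cover the paper's general dominated weakly convergent sequences. Since the statement you were given is posed in the periodic splitting framework of the introduction, your proof does establish it; just be aware that your $\Delta_3$ argument is a special case of, not a replacement for, Corollary \ref{Co: weak convergence}. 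The only step I would tighten is the continuation at the end: as in the paper, one should invoke the blow-up alternative for the non-autonomous problem \eqref{non-autonomous} to conclude $T<T^*_n(u_0)$ from the a priori bound $\norm{u_n(t)}\leq R$, rather than citing local existence for \eqref{eq: flow B} alone.
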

\setcounter{section}{1}
\setcounter{theorem}{2}

In order to get the order of convergence for abstract methods some
extra regularity both on the time derivative and on the
nonlinearity is needed. The basic assumption is as follows: let
$\mathsf{H}_{0}$ be a Hilbert space such that $\mathsf{H}_{1}
\subseteq \mathsf{H}_{0}$, with continuous embedding, we asume
\begin{enumerate}
 \item The solution $u$ of \eqref{duhamel} verifies $u\in W^{1,\infty}\pa{[0,T],\mathsf{H}_{0}}$.
 \item There exists a bounded map
$B':\mathsf{H}_{1} \mapsto \mathcal{B}\pa{\mathsf{H}_{0}}$ such that, for $\varepsilon>0$ and $u\in \mathsf{H}_{1}$, the estimate
\begin{align*}
\norm{B\pa{u+w}-B\pa{u}-B'\pa{u}w}_{\mathsf{H}_{0}}\leq\varepsilon\norm{w}_{\mathsf{H}_{0}}
\end{align*}
holds for some $\delta >0$ and for any $w\in H_1$ with $\norm{w}_{\mathsf{H}_{1}}<\delta$.
\end{enumerate}

\setcounter{section}{3}
\setcounter{theorem}{8}
\begin{theorem}[Local error]
Let $u_{0}\in \mathsf{H}_{1}$ and $T<T^{*}(u_0)$, then
there exists a constant $C>0$ and $n_{0}\in\mathbb{N}$ such that
for $n\geq n_{0}$, the following estimate holds for the time step
$h_n=T/n$
\[
\norm{\Phi\pa{h_{n}} u_0-\Phi_n \pa{h_{n}}
u_{0}}_{\mathsf{H}_{0}}\leq C h_{n}^{2}.
\]
\end{theorem}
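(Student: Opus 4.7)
The plan is to set up and control an error equation for $e_n(t):=u(t)-u_n(t)$, where $u=\Phi(\cdot)u_0$ is the exact flow and $u_n=\Phi_n(\cdot)u_0$ is the discrete one. By the preceding Convergence Theorem, for $n$ large both flows exist on $[0,T]$ and $u_n\to u$ in $C([0,T],\mathsf{H}_1)$; in particular $\norm{e_n(s)}_{\mathsf{H}_1}<\delta$ for all $s\in[0,T]$ and $n$ large, so that the Fr\'echet-type Assumption~2 applies uniformly. Subtracting the two evolution equations gives
\[
(e_n)_t + i\alpha_n(t) A e_n + i\beta_n(t)\pa{B(u)-B(u_n)} = d_n(t),
\]
where the defect of the exact solution against the non-autonomous scheme is
\[
d_n(t)=-i(1-\alpha_n(t))\,Au(t)-i(1-\beta_n(t))\,B(u(t)).
\]
Using $u_t=-i(A+B)u$, equivalently $d_n=(1-\alpha_n)u_t+i(\beta_n-\alpha_n)B(u)$, a form in which Assumption~1 immediately yields $d_n\in L^\infty([0,T],\mathsf{H}_0)$.

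Next, apply Duhamel's formula with the linear propagator $V_A(t,s)=\Phi^A(\gamma_n(t)-\gamma_n(s))$ associated to $-i\alpha_n A$, with $\gamma_n(r)=\int_0^r\alpha_n$. This yields
\[
e_n(h_n) = -i\int_0^{h_n} V_A(h_n,s)\,\beta_n(s)\pa{B(u)-B(u_n)}(s)\,ds + \int_0^{h_n} V_A(h_n,s)\,d_n(s)\,ds.
\]
Assumption~2 provides the Lipschitz bound $\norm{B(u)-B(u_n)}_{\mathsf{H}_0}\leq M\norm{e_n}_{\mathsf{H}_0}$, uniformly in $n$, and $V_A$ is an isometry of $\mathsf{H}_0$, so the nonlinear contribution is bounded by $C\int_0^{h_n}\norm{e_n(s)}_{\mathsf{H}_0}\,ds$, ready to be absorbed by a Gronwall argument at the end.

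The crux is to show the defect integral is $O(h_n^2)$ in $\mathsf{H}_0$. Here I would exploit the zero-mean identities $\int_0^{h_n}(1-\alpha_n)=\int_0^{h_n}(\beta_n-\alpha_n)=0$. Set $\Psi_n(s)=\int_0^s(1-\alpha_n(r))\,dr$; it vanishes at both endpoints of $[0,h_n]$ and $\norm{\Psi_n}_\infty=O(h_n)$. An integration by parts on $\int_0^{h_n}(1-\alpha_n)V_A(h_n,s)u_t(s)\,ds$ transfers the oscillatory factor onto $\Psi_n$, leaving $-\int_0^{h_n}\Psi_n\,\partial_s[V_A u_t]\,ds$. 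Using $\partial_s V_A(h_n,s)=i\alpha_n AV_A$, the commutation $AV_A=V_A A$, and the identity $u_t=-i(Au+B(u))$ to re-express $Au_t$ (together with Assumption~2 for the $B'(u)$-piece), the integrand is $O(1)$ in $\mathsf{H}_0$, and multiplying by $\norm{\Psi_n}_\infty$ yields $O(h_n^2)$. The analogous argument using $\widetilde\Psi_n(s)=\int_0^s(\beta_n-\alpha_n)$ controls the $B(u)$ piece of $d_n$.

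Combining the two contributions yields $\norm{e_n(h_n)}_{\mathsf{H}_0}\leq Ch_n^2+C\int_0^{h_n}\norm{e_n(s)}_{\mathsf{H}_0}\,ds$, and Gronwall (or a direct iteration on the short interval $[0,h_n]$) promotes this to $\norm{e_n(h_n)}_{\mathsf{H}_0}\leq Ch_n^2$. The main obstacle is precisely the integration-by-parts step: executing it rigorously with only the $W^{1,\infty}(\mathsf{H}_0)$ time regularity of $u$ (so $u_t$ is in $L^\infty(\mathsf{H}_0)$ but not pointwise Lipschitz) requires invoking both assumptions in tandem---the evolution equation to trade $A$-derivatives for algebraic combinations of $Au$ and $B(u)$ that remain in $\mathsf{H}_0$, and the $\mathsf{H}_0$-Fr\'echet bound on $B'$ to handle the nonlinear derivatives---rather than any direct pointwise regularity of $u_t$.
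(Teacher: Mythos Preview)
Your overall architecture---set up an error equation for $e_n=u-u_n$, apply Duhamel with the $\alpha_n$-propagator $V_A$, absorb the nonlinear piece by a Lipschitz bound, and show the defect integral is $O(h_n^2)$ via the zero-mean structure---is sound and close in spirit to the paper. The gap is in the execution of the defect bound, precisely at the step you flag as ``the main obstacle''.

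After your integration by parts on $\int_0^{h_n}(1-\alpha_n)V_A(h_n,s)u_t(s)\,ds$ you must control $\partial_s[V_A u_t]$ in $\mathsf{H}_0$. Writing $\partial_s[V_A u_t]=i\alpha_n V_A Au_t+V_A u_{tt}$ and using $u_{tt}=-iAu_t-iB'(u)u_t$ gives
\[
\partial_s[V_A u_t]=i(\alpha_n-1)\,V_A Au_t-iV_A B'(u)u_t.
\]
The $B'(u)u_t$ term is indeed $O(1)$ in $\mathsf{H}_0$ by Assumption~2, but the remaining term contains $Au_t=-i\bigl(A^2u+AB(u)\bigr)$, and $A^2u$ is \emph{not} in $\mathsf{H}_0$ under the stated hypotheses: you only know $u\in\mathsf{H}_1\subseteq D(A)$, hence $Au\in\mathsf{H}_0$, not $Au\in D(A)$. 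The bounded prefactor $(\alpha_n-1)$ does not compensate for an unbounded integrand, and a further integration by parts only produces higher powers of $A$. The same obstruction arises if you use the first form $d_n=-i(1-\alpha_n)Au-i(1-\beta_n)B(u)$ and integrate the first piece by parts: $\partial_s[V_A Au]$ then contains $A^2u$ explicitly.

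The paper avoids this by never letting a bare $Au$ or $u_t$ sit inside the oscillatory integral. It subtracts the two Duhamel formulas \emph{directly} and splits the result into $I_{2,n},I_{3,n},I_{4,n}$, each an integral of $\beta_n$ (or $1-\beta_n$) against a function built from propagators applied to $B(u)$ or $B(u)-B(u_n)$. Since $B:\mathsf{H}_1\to\mathsf{H}_1$, these integrands lie in $\mathsf{H}_1$; when Lemma~\ref{Le: vdif} (which is exactly your zero-mean/IBP trick) differentiates them in time, the single factor of $A$ lands on $B(u)\in\mathsf{H}_1$, producing $AB(u)\in\mathsf{H}_0$, never $A^2u$. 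Incidentally, the paper also handles the nonlinear term $I_{4,n}$ by the same lemma (using $v^{(4)}(0)=0$) rather than by Gronwall. If you want to salvage your route, you must rewrite $\int_0^{h_n}(1-\alpha_n)V_A Au\,ds$ using $i\alpha_n V_A A=\partial_s V_A$ together with the exact Duhamel identity for $u$; carried out carefully this collapses exactly to the paper's $I_{2,n}+I_{3,n}$ decomposition.
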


\begin{theorem}[Global error]
Let $u_{0}\in \mathsf{H}_{1}$ and $T<T^{*}(u_0)$, then
there exists a constant $C>0$ and $n_{0}\in\mathbb{N}$ such that,
for $n\geq n_{0}:$
\[
\max\limits_{0\leq k \leq n}\norm{\Phi\pa{kh_{n}} u_0-\Phi_n
\pa{kh_{n}} u_{0}}_{\mathsf{H}_{0}}\leq C h_{n}.
\]
\end{theorem}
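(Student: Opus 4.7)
The plan is a standard telescoping argument of Lady Windermere's Fan type: write the global error at step $k$ as a sum of local errors, each propagated forward by the exact flow. Setting $e_k = \Phi(kh_n)u_0 - \Phi_n(kh_n)u_0$, and using the semigroup-like identity $\Phi_n((j+1)h_n)u_0 = \Phi_n(h_n)\Phi_n(jh_n)u_0$ (which comes from the $1$-periodicity of $\alpha,\beta$ combined with $nh_n=1$), one obtains the telescoping decomposition
$$e_k = \sum_{j=0}^{k-1}\Phi\pa{(k-j-1)h_n}\bigl[\Phi(h_n) - \Phi_n(h_n)\bigr]\Phi_n(jh_n)u_0.$$
Taking the $\mathsf{H}_0$-norm and using the triangle inequality then reduces the problem to two ingredients.

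The first ingredient is Lipschitz continuity of the exact flow $\Phi(s)$ in the $\mathsf{H}_0$-norm, uniformly in $s\in[0,T]$ and uniformly on bounded subsets of $\mathsf{H}_1$. This follows from Duhamel's formula \eqref{duhamel}, the hypothesis that $B'$ is bounded as a map into $\mathcal{B}(\mathsf{H}_0)$ (which makes $B$ Lipschitz in $\mathsf{H}_0$ on $\mathsf{H}_1$-bounded sets), the unitarity of $\Phi^A$ on $\mathsf{H}_0$ if $A$ extends there, or otherwise from a direct energy estimate, followed by Gronwall. The second ingredient is the uniform local error: for intermediate iterates $v_j = \Phi_n(jh_n)u_0$ lying in a fixed $\mathsf{H}_1$-bounded set, the local error theorem yields
$$\norm{\bigl[\Phi(h_n) - \Phi_n(h_n)\bigr]v_j}_{\mathsf{H}_0}\leq C_1 h_n^2$$
with a constant $C_1$ independent of $j$. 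The required $\mathsf{H}_1$-boundedness of the iterates $v_j$ is provided by Theorem 3.1: since $\Phi_n(t)u_0 \to \Phi(t)u_0$ uniformly on $[0,T]$ in $\mathsf{H}_1$ and $\Phi(t)u_0$ is $\mathsf{H}_1$-continuous on $[0,T]$, the whole family $\{\Phi_n(jh_n)u_0 : 0\leq j\leq n,\ n\geq n_0\}$ sits in a common bounded subset of $\mathsf{H}_1$.

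Combining these two ingredients and summing,
$$\norm{e_k}_{\mathsf{H}_0}\leq \sum_{j=0}^{k-1} L\cdot C_1 h_n^2 \leq n L C_1 h_n^2 = L C_1 T\, h_n,$$
which is exactly the desired global first-order bound.

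The main obstacle is establishing the \emph{uniformity} in the local error step: Theorem 3.9 is phrased with a constant $C$ attached to the single initial datum $u_0$, whereas here I must apply it to each iterate $\Phi_n(jh_n)u_0$ with the \emph{same} constant. This forces one to revisit the proof of Theorem 3.9 and check that its constant depends on the starting point only through an $\mathsf{H}_1$-bound (and on the remaining existence time), and that no piece of the argument degrades as the starting point varies within a bounded $\mathsf{H}_1$-set. Once this uniformity is secured, together with the $\mathsf{H}_0$-Lipschitzness of the exact flow, the summation above closes the argument with no further subtlety.
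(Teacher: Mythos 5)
Your proof is correct and follows essentially the same route as the paper: the telescoping (Lady Windermere's fan) decomposition is just the unrolled form of the paper's one-step recursion $e_{k+1}\leq e^{Lh_n}e_k+Ch_n^2$, and both arguments rest on the same two ingredients, namely the $\mathsf{H}_0$-Lipschitz estimate \eqref{eq:lipPhiK} for the exact flow and the local error bound applied at the numerical iterates $\Phi_n(kh_n)u_0$. The uniformity issue you flag is real but resolves exactly as you suggest: the constant in Theorem \ref{Th: local convergence} depends on the starting point only through the $\mathsf{H}_1$-radius $R$ bounding $u$ and $u_n$ on $[0,T]$, which Theorem \ref{Th: convergence on H1} supplies uniformly for all iterates.
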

\setcounter{section}{1}

\section{Auxiliary Results}

This section is devoted to present some basic results that we use
to prove the convergence theorems. We start with the following
notion. We say that a sequence $\{\alpha_{n}\}_{n\in \mathbb{N}}$
of functions in $L_{\text{loc}}^{1}\pa{\R}$ converges weakly to
$\alpha\in L_{\text{loc}}^{1}\pa{\R}$, denoted by
$\alpha_{n}\rightharpoonup \alpha$, if for any compact interval
$I\subset\R$ and $\theta\in C\pa{I}$, the following estimate
holds
\[
\lim_{n\to\infty}\int_{I}\alpha_{n}\pa{t}\theta\pa{t}dt=\int_{I}\alpha\pa{t}\theta\pa{t}dt.
\]
\begin{lemma}
\label{AA} Let $\alpha_{n},\alpha,\bar{\alpha}\in
L_{\text{loc}}^{1}\pa{\R}$, $n\in\mathbb{N}$, such that
$\alpha_{n}\rightharpoonup\alpha$ and
$\abs{\alpha_{n}}\leq\bar{\alpha}$. Then for any $\theta\in
C\pa{[0,T]}$ the sequence
$\Theta_{n}\pa{t}=\displaystyle{\int_{0}^{t}}\alpha_{n}\pa{t'}\theta\pa{t'}dt'$
converges uniformly to
$\Theta\pa{t}=\displaystyle{\int_{0}^{t}}\alpha\pa{t'}\theta\pa{t'}dt'$,
on $[0,T]$.
\end{lemma}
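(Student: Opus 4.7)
The plan is to combine pointwise convergence (obtained directly from the hypothesis) with an equicontinuity estimate (obtained from the dominating function $\bar{\alpha}$), and then to conclude either via Arzelà--Ascoli or via a standard $\varepsilon/3$ argument on a finite partition.

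First I would establish \emph{pointwise} convergence $\Theta_n(t)\to\Theta(t)$ for every $t\in[0,T]$. This is immediate from the hypothesis: for each fixed $t$, the interval $I=[0,t]$ is compact, the restriction $\theta|_I$ is continuous, and so the weak convergence $\alpha_n\rightharpoonup\alpha$ applied against this test function gives $\int_0^t \alpha_n\theta\to\int_0^t\alpha\theta$. Note we do \emph{not} have to fight the fact that $\chi_{[0,t]}\theta$ fails to be continuous on a larger interval, because the definition of weak convergence is stated per compact interval.

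Next I would prove that the family $\{\Theta_n\}$ is equicontinuous and uniformly bounded on $[0,T]$. For $0\le s\le t\le T$,
\[
\abs{\Theta_n(t)-\Theta_n(s)}\leq \norm{\theta}_{\infty}\int_s^t\abs{\alpha_n(t')}\,dt'\leq \norm{\theta}_{\infty}\int_s^t\bar{\alpha}(t')\,dt'.
\]
Since $\bar{\alpha}\in L^1_{\text{loc}}(\R)$, the function $s\mapsto \int_0^s\bar{\alpha}$ is absolutely continuous on $[0,T]$, hence uniformly continuous, which yields equicontinuity with a modulus independent of $n$. Uniform boundedness follows from the same estimate with $s=0$.

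Finally I would combine the two ingredients. Given $\varepsilon>0$, choose by equicontinuity a partition $0=t_0<t_1<\cdots<t_N=T$ with $\abs{\Theta_n(t)-\Theta_n(t_j)}<\varepsilon/3$ whenever $t\in[t_j,t_{j+1}]$, and likewise for the limit $\Theta$ (which is continuous by the same bound applied to $\alpha$). By the already established pointwise convergence on the finite set $\{t_0,\ldots,t_N\}$, there exists $n_0$ with $\abs{\Theta_n(t_j)-\Theta(t_j)}<\varepsilon/3$ for all $n\ge n_0$ and all $j$. The triangle inequality then gives $\sup_{t\in[0,T]}\abs{\Theta_n(t)-\Theta(t)}<\varepsilon$ for $n\ge n_0$. (Equivalently, Arzelà--Ascoli extracts a uniformly convergent subsequence, whose limit must be $\Theta$ by pointwise convergence, forcing the whole sequence to converge uniformly.) I do not expect any genuine obstacle: the only subtle point is noticing that the weak convergence hypothesis applies directly on each $[0,t]$, so no approximation of $\chi_{[0,t]}$ by continuous functions is needed.
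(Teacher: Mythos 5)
Your proof is correct and follows essentially the same route as the paper's: pointwise convergence of $\Theta_n$ from the weak convergence hypothesis applied on each $[0,t]$, uniform boundedness and equicontinuity from the dominating function $\bar{\alpha}$, and then the standard upgrade to uniform convergence (the paper phrases this last step as a proof by contradiction via Arzel\`a--Ascoli, while your primary version uses the direct $\varepsilon/3$ finite-partition argument, but the ingredients are identical). Your explicit remark that no approximation of $\chi_{[0,t]}$ by continuous functions is needed, because the weak convergence is defined per compact interval, is a point the paper leaves implicit.
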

\begin{proof}
Suppose $\Theta_{n}$ does not converge to $\Theta$ uniformly, then
there exists $\varepsilon>0$ and a subsequence $\Theta_{n_{k}}$
such that $\max\limits_{0\leq t\leq
T}\abs{\Theta\pa{t}-\Theta_{n_{k}}\pa{t}}\geq\varepsilon$. Using
the estimate
\[
\abs{\Theta_{n_{k}}\pa{t}}\leq\max\limits_{0\leq t\leq T}\abs{\theta\pa{t}}\norm{\bar{\alpha}}_{L^{1}\pa{[0,T]}},
\]
we have that the sequence $\set{\Theta_{n_{k}}}_{n\geq 1}$ is
uniformly bounded in $C\pa{[0,T]}$. A similar argument allows us
to conclude that the sequence $\set{\Theta_{n_{k}}}_{n\geq 1}$ is
equicontinuous. By Arzel\'a-Ascoli theorem, we obtain that (a
subsequence of) $\Theta_{n_{k}}$ converges uniformly to
$\Theta^{*}\neq\Theta$ on $[0,T]$. But $\Theta_{n_{k}}$ converges
pointwise to $\Theta$, which is a contradiction. This finishes the
proof.
\end{proof}

For any real valued function $\alpha\in
L_{\text{loc}}^{1}\pa{\R}$, we define the propagator operator
$\Phi^{A,\alpha}\pa{t_{1},t_{0}}=\Phi^{A}\pa{\tau\pa{t_{1},t_{0}}}$,
where
$\tau\pa{t_{1},t_{0}}=\displaystyle{\int_{t_{0}}^{t_{1}}}\alpha\pa{t}dt$.
It is clear that the propagator $\Phi^{A,\alpha}\pa{t_{1},t_{0}}$
verifies:
\begin{enumerate}
\item $\Phi^{A,\alpha}\pa{t_{0},t_{0}}=I$.
\item $\Phi^{A,\alpha}\pa{t_{2},t_{0}}=\Phi^{A,\alpha}\pa{t_{2},t_{1}}\Phi^{A,\alpha}\pa{t_{1},t_{0}}$.
\item If $u\in D\pa{A}$, then
$\partial_{t}\Phi^{A,\alpha}\pa{t,t_{0}}u=-i\alpha\pa{t}A\Phi^{A,\alpha}\pa{t,t_{0}}u$.
\end{enumerate}
Observe that if $u_{0}\in D\pa{A}$, then
$u\pa{t}=\Phi^{A,\alpha}\pa{t,0}u_{0}$ is the solution of the
linear evolution Cauchy problem $iu_{t}=\alpha(t)\,A u$ with
initial condition $u\pa{0}=u_{0}$.

\begin{proposition}
\label{Pr: unif convergence} Let $\{\alpha_{n}\}_{n\in
\mathbb{N}}$ be a sequence of real valued functions in
$L_{\text{loc}}^{1}\pa{\R}$ such that $\alpha_{n}\rightharpoonup
1$, then $\Phi^{A,n}\pa{t,t'}=\Phi^{A,\alpha_{n}}\pa{t,t'}$
converges strongly to $\Phi^{A}\pa{t-t'}$. Moreover, if
$\abs{\alpha_{n}}\leq\bar{\alpha}\in L_{\text{loc}}^{1}\pa{\R}$,
then the convergence is uniform for $t',t$ on bounded intervals.
\end{proposition}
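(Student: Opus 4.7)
The plan is to rewrite the propagator as $\Phi^{A,n}\pa{t,t'}=\Phi^{A}\pa{\tau_{n}\pa{t,t'}}$ with $\tau_{n}\pa{t,t'}=\int_{t'}^{t}\alpha_{n}\pa{s}ds$, and reduce both statements to convergence of the real-valued functions $\tau_{n}$, using only that the one-parameter unitary group $\Phi^{A}$ is strongly continuous on $\R$.

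For the first (strong-operator pointwise in $(t,t')$) conclusion, fix $t,t'\in\R$ with, say, $t'\leq t$. Testing the assumption $\alpha_{n}\rightharpoonup 1$ against the continuous test function $\theta\equiv 1$ on the compact interval $[t',t]$ yields directly $\tau_{n}\pa{t,t'}\to t-t'$. Fix $v\in\mathsf{H}_{1}$; strong continuity of $\Phi^{A}$ then gives $\Phi^{A,n}\pa{t,t'}v=\Phi^{A}\pa{\tau_{n}\pa{t,t'}}v\to\Phi^{A}\pa{t-t'}v$, which is exactly the claimed strong convergence.

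For the uniform conclusion, assume the domination $\abs{\alpha_{n}}\leq\bar{\alpha}\in L_{\text{loc}}^{1}\pa{\R}$ and fix a bounded interval $[0,T]$. Applying Lemma \ref{AA} with $\theta\equiv 1$ shows that $t\mapsto\int_{0}^{t}\alpha_{n}\pa{s}ds$ converges uniformly on $[0,T]$ to $t$, so $\tau_{n}\pa{t,t'}=\int_{0}^{t}\alpha_{n}-\int_{0}^{t'}\alpha_{n}$ converges to $t-t'$ uniformly on $[0,T]\times[0,T]$. In particular, for $n$ large enough the values $\tau_{n}\pa{t,t'}$, together with $t-t'$, all lie in some fixed compact interval $[-M,M]$. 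For each $v\in\mathsf{H}_{1}$ the orbit map $s\mapsto \Phi^{A}\pa{s}v$ is continuous, hence uniformly continuous on $[-M,M]$. Combining this uniform continuity with the uniform convergence $\tau_{n}\pa{t,t'}\to t-t'$ produces, for any $\varepsilon>0$, an index $n_{0}$ such that $\norm{\Phi^{A,n}\pa{t,t'}v-\Phi^{A}\pa{t-t'}v}<\varepsilon$ for all $n\geq n_{0}$ and all $(t,t')\in[0,T]^{2}$.

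The only obstacle worth noting is conceptual rather than technical: without the domination hypothesis, Lemma \ref{AA} is unavailable and one is limited to pointwise convergence of $\tau_{n}$ (one test function per interval $[t',t]$); the bound $\abs{\alpha_{n}}\leq\bar\alpha$ is precisely what upgrades this to uniform convergence of $\tau_{n}$, which then, through uniform continuity of the orbit map on a compact arc of $\R$, transfers to uniform strong convergence of the propagators.
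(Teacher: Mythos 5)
Your proof is correct. The skeleton is the same as the paper's --- rewrite $\Phi^{A,n}\pa{t,t'}=\Phi^{A}\pa{\tau_{n}\pa{t,t'}}$ and reduce everything to the (pointwise, then uniform) convergence of the scalar functions $\tau_{n}\pa{t,t'}$ to $t-t'$, the uniform version coming from Lemma \ref{AA} with $\theta\equiv 1$. Where you diverge is the last step, the transfer from uniform convergence of $\tau_{n}$ to uniform convergence of the propagators. The paper restricts to $u\in D\pa{A}$, where the quantitative bound $\norm{\Phi^{A,n}\pa{t,t'}u-\Phi^{A}\pa{t-t'}u}\leq\abs{\tau_{n}\pa{t,t'}-\pa{t-t'}}\norm{Au}$ holds, and then extends to all of $\mathsf{H}_{1}$ by density of $D\pa{A}$ together with unitarity and an $\varepsilon/3$ argument. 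You instead observe that for each fixed $v$ the orbit map $s\mapsto\Phi^{A}\pa{s}v$ is uniformly continuous on the compact interval containing all the relevant values of $\tau_{n}$ and $t-t'$ (which exists thanks to the domination $\abs{\alpha_{n}}\leq\bar\alpha$), and compose that uniform continuity with the uniform convergence of $\tau_{n}$. This bypasses the density argument entirely and uses only strong continuity of the group, so it is marginally more elementary; the paper's route has the side benefit of producing an explicit rate $\abs{\tau_{n}-\pa{t-t'}}\norm{Au}$ on $D\pa{A}$, which is the kind of estimate exploited later in the error analysis, whereas yours gives no rate. Both arguments are complete and valid for the proposition as stated.
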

\begin{proof}
Let $I\subseteq \R$ be a compact interval and $\tau_{n}:I\times
I\to\R$ defined by $\alpha_{n}$. Since $\alpha_{n}\rightharpoonup
1$, we have $\tau_{n}\pa{t,t'}\to t-t'$, thus
$\lim\limits_{n\to\infty}\Phi^{A,n}\pa{t,t'}u=\Phi^{A}\pa{t-t'}u$.
If $\abs{\alpha_{n}}\leq\bar{\alpha}$, from Lemma \ref{AA} it
follows that the sequence $\tau_{n}(t,t')$ converges to $t-t'$
uniformly on $I\times I$. For any $u\in D\pa{A}$,  the estimate
\[
\norm{\Phi^{A,n}\pa{t,t'}u-\Phi^{A}\pa{t-t'}u}\leq\abs{\tau_{n}\pa{t,t'}-\pa{t-t'}}\norm{Au},
\]
is verified. Since $D(A)$ is dense in $\mathsf{H}_{1}$, using an
$\varepsilon/3$ argument we finish the proof.
\end{proof}

\begin{lemma}
\label{Le: piecewise constant approximation} Let $v\in
C\pa{[0,T],\mathsf{H}_{1}}$ and $\varepsilon>0$. Then there exist
$\theta_{j}\in C\pa{[0,T]}$ and $z_{j}\in\mathsf{H}_{1}$, $0\leq
j\leq m$, such that the function
\begin{align}\label{def_z}
z\pa{t}=\sum\limits_{0\leq j\leq m}\theta_{j}\pa{t}z_{j}
\end{align}
satisfies $\max\limits_{t\in
[0,T]}\norm{v\pa{t}-z\pa{t}}<\varepsilon$.
\end{lemma}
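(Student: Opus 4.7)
The plan is to approximate $v$ by a piecewise-linear (in time) interpolant of finitely many of its values, using the uniform continuity of $v$ on the compact interval $[0,T]$.

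First I would exploit that $v\in C\pa{[0,T],\mathsf{H}_1}$ is uniformly continuous: given $\varepsilon>0$, choose $\delta>0$ such that $\norm{v\pa{t}-v\pa{s}}<\varepsilon$ whenever $\abs{t-s}<\delta$. Pick $m\in\mathbb{N}$ with $T/m<\delta$ and set $t_j=jT/m$ for $0\leq j\leq m$, together with the sample points $z_j=v\pa{t_j}\in\mathsf{H}_1$.

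Next I would build a continuous partition of unity on $[0,T]$ by hat functions: let $\theta_j\in C\pa{[0,T]}$ be the piecewise linear function with $\theta_j\pa{t_j}=1$, $\theta_j\pa{t_k}=0$ for $k\neq j$, and linear on each $[t_{k-1},t_k]$. These satisfy $\theta_j\geq 0$, $\sum_{j=0}^m\theta_j\equiv 1$, and for any $t\in[0,T]$ the support condition forces $\theta_j\pa{t}\neq 0$ only when $\abs{t-t_j}<T/m<\delta$. Defining $z\pa{t}=\sum_{j=0}^m\theta_j\pa{t}z_j$ as in \eqref{def_z}, the standard estimate
\[
\norm{v\pa{t}-z\pa{t}}=\Big\|\sum_{j=0}^m\theta_j\pa{t}\pa{v\pa{t}-v\pa{t_j}}\Big\|\leq\sum_{j=0}^m\theta_j\pa{t}\norm{v\pa{t}-v\pa{t_j}}<\varepsilon
\]
holds uniformly in $t\in[0,T]$, which gives the desired bound.

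There is no real obstacle here; the only point to keep in mind is that the approximation must be written as a finite sum of scalar continuous functions times fixed vectors in $\mathsf{H}_1$ (rather than, say, a step function), so one cannot use a naive piecewise-constant interpolant. The hat-function partition of unity handles this neatly while keeping the $\theta_j$ continuous, and the argument is exactly the classical proof that continuous functions into a Banach space are uniform limits of finite-rank tensors.
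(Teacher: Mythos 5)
Your proof is correct and follows essentially the same route as the paper: a continuous partition of unity subordinate to a partition of $[0,T]$ finer than the modulus of uniform continuity, with $z_j=v(t_j)$; your explicit hat functions are just a concrete instance of the abstract $\theta_j$ the paper uses, and your convex-combination estimate replaces the paper's two-term triangle inequality with $\varepsilon/2$. No gaps.
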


\begin{proof}
Let $\delta>0$ be such that
$\norm{v\pa{t}-v\pa{t'}}<\varepsilon/2$ if $\abs{t-t'}<\delta$,
and let $t_{-1}<t_{0}=0<t_{1}<\cdots<t_{m}=T<t_{m+1}$ be a
partition with $t_{j}-t_{j-1}<\delta$. Let also $\theta_{j}\in
C\pa{I}$ be such that $0\leq\theta_{j}\leq 1$, $\sum\limits_{0\leq
j\leq m}\theta_{j}=1$ and
$\mathrm{supp}\pa{\theta_{j}}\subset\pa{t_{j-1},t_{j+1}}$. Taking
$z_{j}=v\pa{t_{j}}$  we have for $t\in[t_{j-1},t_{j}]$

\begin{align*}
\norm{v\pa{t}-z\pa{t}}=&
\norm{\pa{\theta_{j-1}\pa{t}+\theta_{j}\pa{t}}v\pa{t}-
\theta_{j-1}\pa{t}z_{j-1}-{\theta_{j}\pa{t}z_{j}}}\\
\leq&\norm{v\pa{t}-v\pa{t_{j-1}}}+\norm{v\pa{t}-v\pa{t_{j}}}.
\end{align*}
Since $\abs{t-t_{j-1}},\abs{t-t_{j-1}}<\delta$, the proof is
finished.
\end{proof}

\begin{corollary}
\label{Co: weak convergence}

Let $\beta_{n}$ be a sequence of real valued functions in
$L_{\text{loc}}^{1}\pa{\R}$ such that $\beta_{n}\rightharpoonup
0$ with $\abs{\beta_{n}}\leq\bar{\beta}\in
L_{\text{loc}}^{1}\pa{\R}$, and let $v\in
C\pa{[0,T],\mathsf{H}_{1}}$. Define $V_n(t)$ as follows
\begin{align}\label{def_Vn}
  V_{n}\pa{t}=\displaystyle{\int_{0}^{t}}\beta_{n}\pa{t'}v\pa{t'}dt'
\end{align}
Then $V_{n}\in C\pa{[0,T],\mathsf{H}_{1}}$ and
$\lim\limits_{n\to\infty}\max\limits_{t\in
[0,T]}\norm{V_{n}\pa{t}}=0$.
\end{corollary}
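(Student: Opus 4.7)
The plan is to combine the piecewise-constant approximation provided by Lemma \ref{Le: piecewise constant approximation} with the uniform convergence for scalar integrals given by Lemma \ref{AA}, via a standard $\varepsilon/2$ argument.

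First I would verify the regularity claim $V_n \in C([0,T],\mathsf{H}_1)$. Since $v$ is continuous with values in $\mathsf{H}_1$ and $\beta_n$ is locally integrable and bounded by $\bar\beta \in L^1_{\mathrm{loc}}$, the Bochner integrand $\beta_n(\cdot) v(\cdot)$ lies in $L^1([0,T],\mathsf{H}_1)$, and hence its indefinite integral $V_n$ is continuous (in fact absolutely continuous) into $\mathsf{H}_1$. This is routine.

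For the convergence statement, fix $\varepsilon>0$. Applying Lemma \ref{Le: piecewise constant approximation}, choose $z(t)=\sum_{0\le j\le m}\theta_j(t) z_j$ with $\theta_j \in C([0,T])$ and $z_j\in\mathsf{H}_1$ such that $\max_{t\in[0,T]}\|v(t)-z(t)\|<\varepsilon$. Split
\[
V_n(t)=\int_0^t \beta_n(t')\bigl(v(t')-z(t')\bigr)\,dt' + \sum_{j=0}^{m} \Big(\int_0^t \beta_n(t')\theta_j(t')\,dt'\Big)\, z_j.
\]
The first term is controlled in $\mathsf{H}_1$-norm by $\varepsilon\,\|\bar\beta\|_{L^1([0,T])}$, uniformly in $t$ and $n$. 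For the second term, since $\beta_n\rightharpoonup 0$ and $|\beta_n|\le\bar\beta$, Lemma \ref{AA} (applied with $\alpha=0$ and test functions $\theta_j$) gives that each scalar integral $\int_0^t\beta_n(t')\theta_j(t')\,dt'$ converges to $0$ uniformly in $t\in[0,T]$. Since there are only finitely many $j$, the whole finite sum converges uniformly to $0$ in $\mathsf{H}_1$, so one can choose $n$ large enough to make its norm smaller than $\varepsilon$. Combining both bounds yields $\max_{t\in[0,T]}\|V_n(t)\|\le \varepsilon(1+\|\bar\beta\|_{L^1([0,T])})$ for large $n$, establishing the claim.

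I do not foresee a serious obstacle: the whole argument is a transfer of the scalar weak-convergence result of Lemma \ref{AA} to the vector-valued setting, and the only subtle point is that we cannot apply Lemma \ref{AA} directly to the $\mathsf{H}_1$-valued function $v$ (it is stated for scalar test functions). Lemma \ref{Le: piecewise constant approximation} is precisely designed to reduce the vector-valued case to a finite combination of scalar integrals, which is the key structural step of the proof.
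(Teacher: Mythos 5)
Your proposal is correct and follows essentially the same route as the paper: approximate $v$ by the finite sum $z$ from Lemma \ref{Le: piecewise constant approximation}, apply Lemma \ref{AA} to the finitely many scalar integrals $\int_0^t\beta_n\theta_j$, and absorb the remainder with the bound $\varepsilon\norm{\bar\beta}_{L^1([0,T])}$. The only (minor) addition is your explicit verification of $V_n\in C([0,T],\mathsf{H}_1)$, which the paper leaves implicit.
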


\begin{proof}
Let $\varepsilon>0$ and let $z(t)$ be the function given by Lemma
\ref{Le: piecewise constant approximation}. We define
\[
Z_{n}\pa{t}=\displaystyle{\int_{0}^{t}}\beta_{n}\pa{t'}z\pa{t'}dt'=\sum\limits_{0\leq
j\leq m}\Theta_{j,n}\pa{t}z_{j},
\]
where
$\Theta_{j,n}\pa{t}=\displaystyle{\int_{0}^{t}}\beta_{n}\pa{t'}\theta_{j}\pa{t'}dt'$.
From Lemma \ref{AA},
$\lim\limits_{n\to\infty}\max\limits_{t\in
[0,T]}\norm{Z_{n}\pa{t}}=0$. On the other hand, from Lemma
\ref{Le: piecewise constant approximation} we have
$\max\limits_{t\in
[0,T]}\norm{V_{n}\pa{t}-Z_{n}\pa{t}}\leq\varepsilon\norm{\bar{\beta}}_{L^{1}\pa{[0,T]}}$
which proves the result.
\end{proof}

\begin{corollary}
\label{corolario de Pr: unif convergence} Let $v\in
C\pa{I,\mathsf{H}_{1}}$ and $\{\alpha_{n}\}_{n\in \mathbb{N}}$ a
sequence of real valued functions in $L_{\text{loc}}^{1}\pa{\R}$
such that $\alpha_{n}\rightharpoonup 1$ and
$\abs{\alpha_{n}}\leq\bar{\alpha}\in L_{\text{loc}}^{1}\pa{\R}$.
Then $\Phi^{A,n}\pa{t,t'}v\pa{t'}$ converges uniformly to $\Phi^{A}\pa{t-t'}v\pa{t'}$ on $I\times I$.
\end{corollary}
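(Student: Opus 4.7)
The plan is to reduce the statement to the pointwise uniform convergence established in Proposition~\ref{Pr: unif convergence} by approximating the continuous curve $v$ by a finite linear combination $z(t) = \sum_{j} \theta_j(t) z_j$ of the form provided by Lemma~\ref{Le: piecewise constant approximation}, and to control the residual via the fact that both $\Phi^{A,n}(t,t')$ and $\Phi^{A}(t-t')$ are isometries on $\mathsf{H}_{1}$ (being generated by the self-adjoint operator $A$, possibly after a time reparametrization).

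Given $\varepsilon > 0$, I first apply Lemma~\ref{Le: piecewise constant approximation} to obtain $z \in C([0,T], \mathsf{H}_1)$ of the form \eqref{def_z} such that $\max_{t \in I}\norm{v(t) - z(t)} < \varepsilon$. Then I split, via the triangle inequality,
\begin{align*}
\norm{\Phi^{A,n}(t,t')v(t') - \Phi^{A}(t-t')v(t')}
&\leq \norm{\Phi^{A,n}(t,t')\bigl(v(t') - z(t')\bigr)} \\
&\quad + \norm{\Phi^{A,n}(t,t')z(t') - \Phi^{A}(t-t')z(t')} \\
&\quad + \norm{\Phi^{A}(t-t')\bigl(z(t') - v(t')\bigr)}.
\end{align*}
The first and third terms are each bounded by $\varepsilon$, uniformly in $(t,t') \in I \times I$, thanks to the isometry property of the two propagators.

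For the middle term I exploit the finite-sum structure: it equals
\[
\Bigl\|\sum_{0\leq j\leq m}\theta_{j}(t')\bigl[\Phi^{A,n}(t,t')z_{j} - \Phi^{A}(t-t')z_{j}\bigr]\Bigr\|
\leq \sum_{0\leq j\leq m}\abs{\theta_{j}(t')}\,\norm{\Phi^{A,n}(t,t')z_{j} - \Phi^{A}(t-t')z_{j}}.
\]
Since $\theta_j$ is bounded on $I$ and, by Proposition~\ref{Pr: unif convergence} applied to each fixed vector $z_j \in \mathsf{H}_1$, the quantity $\norm{\Phi^{A,n}(t,t')z_{j} - \Phi^{A}(t-t')z_{j}}$ tends to $0$ uniformly on $I \times I$, the whole finite sum goes to $0$ uniformly on $I \times I$.

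Combining these estimates, for $n$ large enough the full quantity is bounded by $(2 + 1)\varepsilon$ uniformly in $(t,t') \in I \times I$, which gives the desired uniform convergence. I do not anticipate a genuine obstacle here; the only subtlety is noticing that the approximation Lemma is stated for curves in $\mathsf{H}_{1}$ and yields vectors $z_j \in \mathsf{H}_1$ (not necessarily in $D(A)$), which is exactly what is required to invoke Proposition~\ref{Pr: unif convergence} after its own internal density argument has already upgraded convergence from $D(A)$ to all of $\mathsf{H}_1$.
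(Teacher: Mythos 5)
Your proof is correct and follows essentially the same route as the paper: approximate $v$ by the finite sum $z$ from Lemma~\ref{Le: piecewise constant approximation}, control the two residual terms by unitarity of the propagators, and handle the finite sum term by term via Proposition~\ref{Pr: unif convergence}. (Incidentally, your three-term triangle-inequality splitting is the algebraically clean version of the identity the paper writes down, which as printed has a sign slip that is harmless for the norm estimate.)
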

\begin{proof}
Let $z(t)$ be as in Lemma \ref{Le: piecewise constant
approximation}, then
\begin{align*}
\pa{\Phi^{A,n}\pa{t,t'}-\Phi^{A}\pa{t-t'}}v\pa{t'}=&\,\Phi^{A,n}\pa{t,t'}\pa{v\pa{t'}-z\pa{t'}}\\
&+\,\Phi^{A}\pa{t-t'}\pa{v\pa{t'}-z\pa{t'}}\\
&+\,\pa{\Phi^{A,n}\pa{t,t'}-\Phi^{A}\pa{t-t'}}z\pa{t'}.
\end{align*}

Since $\Phi^{A,n}\pa{t,t'},\Phi^{A}\pa{t-t'}$ are unitary
operators, the first and the second term on the right-hand side
are bounded by $\varepsilon$. From definition of $z$, it is easy
to see that
\begin{align*}
\norm{\pa{\Phi^{A,n}\pa{t,t'}-\Phi^{A}\pa{t-t'}}z\pa{t'}}&
\leq\max_{1\leq j\leq m}\max_{t'\in I}\abs{\theta_{j}\pa{t'}}\times\\
&\sum_{1\leq j\leq
m}\norm{\pa{\Phi^{A,n}\pa{t,t'}-\Phi^{A}\pa{t-t'}}z_{j}}.
\end{align*}
Using Proposition \ref{Pr: unif convergence}, we obtain the
result.
\end{proof}

Let $\beta$ be a bounded, $1$-periodic, function. For $n\in
\mathbb{N}$ we define $\beta_{n}\pa{t}=\beta\pa{n t}$, we note
that $\beta_{n}\rightharpoonup\langle
\beta\rangle=\displaystyle{\int_{0}^{1}}\beta\pa{t}dt$. Then,
under additional hypotheses on $v$, we obtain an estimate for the
order of convergence in Corollary \ref{Pr: unif convergence}.

\begin{lemma}
\label{Le: vdif} Let $v\in W^{1,\infty}\pa{[0,h_{n}],H}$, $V_n(t)$ be given by \eqref{def_Vn} and
$w_n=V_n(h_{n})$. If $\langle \beta\rangle=0$ or $v\pa{0}=0$, then
$w_n$ satisfies
$|w_n|\leq\dfrac{1}{2}\norm{\beta}_{L^{\infty}}\norm{v_{t}}_{L^{\infty}\pa{[0,h_{n}],\mathsf{H}_{1}}}h_{n}^{2}$.
\end{lemma}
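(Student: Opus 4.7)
The plan is to decompose $v(t')$ into its initial value plus an increment, and to exploit the fact that in both hypothesized cases the contribution from the constant part vanishes, while the increment inherits a factor of $t'$ from the $W^{1,\infty}$ regularity of $v$.

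First, I would record the useful identity
\[
\int_{0}^{h_{n}}\beta_{n}\pa{t'}dt'=\frac{1}{n}\int_{0}^{nh_{n}}\beta\pa{s}ds=\frac{1}{n}\int_{0}^{1}\beta\pa{s}ds=\frac{\langle\beta\rangle}{n},
\]
since $h_{n}=1/n$ and $\beta$ is $1$-periodic. Thus the constant-in-$v$ part of the integrand integrates either to $0$ or it gets multiplied by $v(0)$, which is $0$ under the alternative hypothesis.

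Next, I would write $v\pa{t'}=v\pa{0}+\pa{v\pa{t'}-v\pa{0}}$ and split
\[
w_{n}=v\pa{0}\int_{0}^{h_{n}}\beta_{n}\pa{t'}dt'+\int_{0}^{h_{n}}\beta_{n}\pa{t'}\pa{v\pa{t'}-v\pa{0}}dt'.
\]
Under either assumption the first term vanishes by the identity above. For the second term, the fundamental theorem of calculus in the Bochner sense gives $v\pa{t'}-v\pa{0}=\int_{0}^{t'}v_{s}\pa{s}ds$, so
\[
\norm{v\pa{t'}-v\pa{0}}_{\mathsf{H}_{1}}\leq t'\norm{v_{t}}_{L^{\infty}\pa{[0,h_{n}],\mathsf{H}_{1}}}.
\]

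Finally, I would bound
\[
\norm{w_{n}}_{\mathsf{H}_{1}}\leq\norm{\beta}_{L^{\infty}}\norm{v_{t}}_{L^{\infty}\pa{[0,h_{n}],\mathsf{H}_{1}}}\int_{0}^{h_{n}}t'\,dt'=\frac{1}{2}\norm{\beta}_{L^{\infty}}\norm{v_{t}}_{L^{\infty}\pa{[0,h_{n}],\mathsf{H}_{1}}}h_{n}^{2},
\]
which is the claimed estimate. There is no real obstacle here; the only subtlety worth stating explicitly is the scaling computation that turns $\langle\beta\rangle$ on the period $[0,1]$ into the vanishing of $\int_{0}^{h_{n}}\beta_{n}$, which is what makes the hypothesis $\langle\beta\rangle=0$ interchangeable with the hypothesis $v(0)=0$.
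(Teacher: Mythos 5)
Your proof is correct and follows essentially the same route as the paper: both decompose $v$ as $v(0)$ plus its increment, observe that $\int_{0}^{h_{n}}\beta_{n}=\langle\beta\rangle h_{n}$ so the leading term vanishes under either hypothesis, and bound the remainder by $\tfrac{1}{2}\norm{\beta}_{L^{\infty}}\norm{v_{t}}_{L^{\infty}}h_{n}^{2}$. You have merely written out explicitly the ``easy estimation'' the paper leaves to the reader.
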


\begin{proof}
Using $v\pa{t}=v\pa{0}+\displaystyle{\int_{0}^{t}}v_{t}\pa{t'}dt'$, we obtain
\begin{align*}
w_{n}=&\langle \beta\rangle v\pa{0} h_{n}+
\int_{0}^{h_{n}}\int_{0}^{t}\beta_{n}\pa{t}v_{t}\pa{t'}dt'dt\\
=&\int_{0}^{h_{n}}\int_{0}^{t}\beta_{n}\pa{t}v_{t}\pa{t'}dt'dt,
\end{align*}
then $\norm{V_{n}\pa{h_{n}}}\leq 
{\displaystyle{\int_{0}^{h_{n}}\int_{0}^{t}}\abs{\beta_{n}\pa{t}}\norm{v_{t}\pa{t'}}dt'dt}$
and an easy estimation implies the result.
\end{proof}

\section{Main Results}

\subsection{Convergence in $\mathsf{H}_{1}$}

Let $\{\alpha_{n}\}_{n\in \mathbb{N}},\{\beta_{n}\}_{n\in \N}$ be
two sequences of real valued functions in
$L_{\text{loc}}^{1}\pa{\R}$ such that
$\alpha_{n},\beta_{n}\rightharpoonup 1$,
$\abs{\alpha_{n}}\leq\bar{\alpha}$ and
$\abs{\beta_{n}}\leq\bar{\beta}$, with
$\bar{\alpha},\bar{\beta}\in L_{\text{loc}}^{1}\pa{\R}$. For
$n\in \N$ we consider the {\em approximated} evolution problem,
\begin{align}
\label{non-autonomous}
\begin{cases}
  iw_t+\left(\alpha_n A+\beta_n B \right)w=0\\
  w(0)=u_0
\end{cases}
\end{align}
related with the abstract splitting scheme defined by these
sequences, and we denote by $\Phi_n$ the related flow. (The exact
flow will be denoted by $\Phi$.) Let $u_0 \in \mathsf{H}_{1}$ be
given and let $u_n=\Phi_n u_0$ be the solution of the problem
\eqref{non-autonomous}, we recall below the integral expression
for $u_n$
\begin{align}
\label{duhamel generalizado}
u_{n}\pa{t}=\Phi^{A,n}\pa{t,0}u_{0}-i\int_{0}^{t}\beta_{n}\pa{t'}\Phi^{A,n}\pa{t,t'}B\pa{u_{n}\pa{t'}}dt'\,.
\end{align}

We are now in position to give the first result concerning the
uniform convergence of $\Phi_n(t) u_0$ to $\Phi(t) u_0$ for $t\in
[0,T]$ and for any $u_0\in \mathsf{H}_{1}$.

\begin{theorem}[Convergence]
\label{Th: convergence on H1}
Let $u_0\in \mathsf{H}_{1}$ and $T<T^*(u_0)$, then there
exists $n_0\in \mathbb{N}$ such that for any $n\geq n_0$, the
function $\Phi_n(t) u_0$ is defined for $t\in [0,T]$, and
$\lim\limits_{n\to \infty} \max\limits_{t\in [0,T]}\norm{u\pa{t}-u_{n}\pa{t}} = 0$.
\end{theorem}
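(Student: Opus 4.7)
The plan is to combine a bootstrap argument (to ensure that $u_n$ is defined on all of $[0,T]$) with a Gronwall estimate obtained by comparing the Duhamel formula \eqref{duhamel generalizado} for $u_n$ to \eqref{duhamel} for $u$. Set $M=\max_{t\in[0,T]}\norm{u(t)}+1$ and let $L$ be the Lipschitz constant of $B$ on the closed ball of radius $2M$ in $\mathsf{H}_{1}$. Since \eqref{non-autonomous} has coefficients dominated by the locally integrable functions $\bar\alpha,\bar\beta$, a standard fixed-point argument yields a maximal existence time $T_n^{*}>0$; define
\[
T_n=\sup\bigl\{t\in[0,\min(T,T_n^{*})]:\norm{u_n(s)}\leq 2M\text{ for all }s\in[0,t]\bigr\}>0.
\]
The goal is to show that $T_n=T$ for all $n$ large enough and to control $\norm{u_n-u}$ uniformly on $[0,T_n]$.

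Subtracting \eqref{duhamel} from \eqref{duhamel generalizado} and inserting the intermediate terms $\beta_n(t')\Phi^{A,n}(t,t')B(u(t'))$ and $\beta_n(t')\Phi^A(t-t')B(u(t'))$ produces, on $[0,T_n]$, the decomposition
\[
u_n(t)-u(t)=\bigl(\Phi^{A,n}(t,0)-\Phi^A(t)\bigr)u_0+I_1(t)+I_2(t)+I_3(t),
\]
where
\begin{align*}
I_1(t)&=-i\int_0^t\beta_n(t')\Phi^{A,n}(t,t')\bigl[B(u_n(t'))-B(u(t'))\bigr]\,dt',\\
I_2(t)&=-i\int_0^t\beta_n(t')\bigl[\Phi^{A,n}(t,t')-\Phi^A(t-t')\bigr]B(u(t'))\,dt',\\
I_3(t)&=-i\int_0^t\bigl(\beta_n(t')-1\bigr)\Phi^A(t-t')B(u(t'))\,dt'.
\end{align*}
The leading term $(\Phi^{A,n}(t,0)-\Phi^A(t))u_0$ tends to zero uniformly in $t$ by Proposition \ref{Pr: unif convergence}. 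For $I_2$, the factor $\norm{[\Phi^{A,n}(t,t')-\Phi^A(t-t')]B(u(t'))}$ is uniformly small on $[0,T]^{2}$ by Corollary \ref{corolario de Pr: unif convergence} applied to the continuous curve $t'\mapsto B(u(t'))$, and integrating against $\abs{\beta_n}\leq\bar\beta\in L^{1}([0,T])$ preserves this smallness. For $I_3$, the group law $\Phi^A(t-t')=\Phi^A(t)\Phi^A(-t')$ together with the unitarity of $\Phi^A(t)$ reduces the estimate to showing that $\int_0^t(\beta_n(t')-1)\Phi^A(-t')B(u(t'))\,dt'\to 0$ uniformly, which is exactly Corollary \ref{Co: weak convergence} applied with $\beta_n-1\rightharpoonup 0$, dominating function $\bar\beta+1$, and the continuous curve $v(t')=\Phi^A(-t')B(u(t'))$. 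Finally, since $\Phi^{A,n}(t,t')$ is unitary and $u_n(t'),u(t')$ remain in the ball of radius $2M$ for $t'\in[0,T_n]$, the Lipschitz hypothesis yields
\[
\norm{I_1(t)}\leq L\int_0^t\bar\beta(t')\norm{u_n(t')-u(t')}\,dt'.
\]

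Setting $\varepsilon_n=\sup_{t\in[0,T]}\bigl(\norm{(\Phi^{A,n}(t,0)-\Phi^A(t))u_0}+\norm{I_2(t)}+\norm{I_3(t)}\bigr)$, the generalized Gronwall inequality with weight $\bar\beta$ gives
\[
\sup_{t\in[0,T_n]}\norm{u_n(t)-u(t)}\leq\varepsilon_n\exp\bigl(L\norm{\bar\beta}_{L^{1}([0,T])}\bigr),
\]
and the three estimates above yield $\varepsilon_n\to 0$. Choosing $n_0$ so that the right-hand side is strictly less than $1$ for $n\geq n_0$ forces $\norm{u_n(t)}\leq M<2M$ on $[0,T_n]$, which rules out both blow-up at $T_n$ and saturation of the $2M$-ball; hence $T_n=T$ and the bootstrap closes, delivering the claimed uniform convergence. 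The main technical obstacle is the oscillatory weight $\beta_n$: since $\beta_n\rightharpoonup 1$ only weakly, one cannot commute the limit with the integral, and the crux of the argument is the decomposition above, which isolates the oscillation into $I_3$, where the continuous $\mathsf{H}_{1}$-valued integrand is what allows Corollary \ref{Co: weak convergence} to supply the needed uniform smallness.
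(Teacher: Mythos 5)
Your proposal is correct and follows essentially the same route as the paper: the same four-term decomposition of $u-u_n$ (your leading term and $I_2,I_3,I_1$ are the paper's $I_{1,n},I_{3,n},I_{2,n},I_{4,n}$ up to relabeling), handled by the same auxiliary results (Proposition \ref{Pr: unif convergence}, Corollaries \ref{Co: weak convergence} and \ref{corolario de Pr: unif convergence}) and closed by the same Gronwall estimate. Your only addition is to make the continuation/bootstrap argument via $T_n$ explicit, which the paper compresses into the single remark that the bound forces $T<T^{*}_{n}(u_0)$.
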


\begin{proof}
For $t<\min\set{T,T^{*}_{n}(u_0)}$, we write
\begin{align}
\label{u-un}
u\pa{t}-u_{n}\pa{t}=&I_{1,n}\pa{t}{-i}\left(\Phi^{A}\pa{t}I_{2,n}\pa{t}+I_{3,n}\pa{t}+I_{4,n}\pa{t}\right),
\end{align}
where
\begin{align*}
I_{1,n}\pa{t}=&\pa{\Phi^{A}\pa{t}-\Phi^{A,n}\pa{t,0}}u_{0},\\
I_{2,n}\pa{t}=&\int_{0}^{t}\pa{1-\beta_{n}\pa{t'}}\Phi^{A}\pa{-t'}B\pa{u\pa{t'}}dt',\\
I_{3,n}\pa{t}=&\int_{0}^{t}\beta_{n}\pa{t'}\pa{\Phi^{A}\pa{t-t'}-\Phi^{A,n}\pa{t,t'}}B\pa{u\pa{t'}}dt',\\
I_{4,n}\pa{t}=&\int_{0}^{t}\beta_{n}\pa{t'}\Phi^{A,n}\pa{t,t'}\pa{B\pa{u\pa{t'}}-B\pa{u_{n}\pa{t'}}}dt'.
\end{align*}
We shall prove that $I_{j,n}\pa{t}\to 0$ as $n\to\infty$ uniformly on $[0,T]$.

From Proposition \ref{Pr: unif convergence} we get
$\lim\limits_{n\to\infty}\max\limits_{t\in [0,T]}\norm{I_{1,n}\pa{t}}=0$. From
Corollary \ref{Co: weak convergence} we deduce
$\lim\limits_{n\to\infty}\max\limits_{t\in [0,T]}\norm{I_{2,n}\pa{t}}=0$.

For $j=3$ we have the estimate
\begin{align*}
\norm{I_{3,n}\pa{t}}\leq\norm{\bar{\beta}}_{L^{1}\pa{[0,T]}}\max_{t,t'\in [0,T]}
\norm{\pa{\Phi^{A}\pa{t-t'}-\Phi^{A,n}\pa{t,t'}}B\pa{u\pa{t'}}}.
\end{align*}
Using Corollary \ref{corolario de Pr: unif convergence} we obtain
$\lim\limits_{n\to\infty}\norm{I_{3,n}\pa{t}}=0$.

Let $R=\max\limits_{t\in[0,T]}\norm{u\pa{t}}$, and let $L$ be some Lipschitz constant of $B$ on the ball of radius
$2R$ centered at the origin. Then there exists $n_{0}\in\N$ such that for $n\geq n_{0}$ is valid the estimate
\[
\max\limits_{t\in [0,T]}\sum_{j=1}^{3}\norm{I_{j,n}\pa{t}}\leq\varepsilon\exp\pa{-L\norm{\bar{\beta}}_{L^1\pa{[0,T]}}}.
\]
Thus, we have
\begin{align*}
\norm{u\pa{t}-u_{n}\pa{t}}\leq&\,\varepsilon\exp\pa{-L\norm{\bar{\beta}}_{L^1\pa{[0,T]}}}\\
&+L\int_{0}^{t}\bar{\beta}\pa{t'}\norm{u\pa{t'}-u_{n}\pa{t'}}dt',
\end{align*}
from Gronwall inequality we obtain $\norm{u\pa{t}-u_{n}\pa{t}}\leq\varepsilon$,
and then $T<T^{*}_{n}(u_0)$. This finishes the proof.
\end{proof}

\subsection{Error estimate}

In this section we obtain local and global in time error estimates
for general time-splitting methods. These results are optimal for
Lie-Trotter schemes, whose local convergence in the whole space is
quadratic in the time step. Let $\alpha,\beta$ be $1$-periodic,
bounded functions, with $\langle \alpha\rangle=\langle
\beta\rangle=1$, and set $\alpha_{n}\pa{t}=\alpha\pa{nt}$,
$\beta_{n}\pa{t}=\beta\pa{nt}$, with $h=1/n\downarrow 0$. We
recall that, under this situation
$\alpha_{n},\beta_{n}\rightharpoonup 1$. In order to get these
error estimates we impose some regularity both on the time
derivative of the solution and on the nonlinearity $B$, which is
accomplished as follows.
We consider a Hilbert space $\mathsf{H}_{0}$ such that
$\mathsf{H}_{1}$ is continuously embedded in $\mathsf{H}_{0}$,
and there exists a self-adjoint extension of the operator $A:D\to\mathsf{H}_{0}$ with
$\mathsf{H}_{1}\subseteq D$.
We can see that for $u_{0}\in\mathsf{H}_{1}$, the solution $u$ of \eqref{duhamel} or
\eqref{duhamel generalizado} verifies $u\in
W^{1,\infty}\pa{[0,T],\mathsf{H}_{0}}$. We also assume that there
exists a map $B':\mathsf{H}_{1} \mapsto
\mathcal{B}\pa{\mathsf{H}_{0}}$ such that for $R,\varepsilon>0$, it can be chosen $C,\delta>0$ verifying
\begin{subequations}
\label{db}
\begin{align}
&\norm{B'(u)}_{\mathcal{B}\pa{\mathsf{H}_{0}}}\leq C,\\
&\norm{B\pa{u+w}-B\pa{u}-B'\pa{u}w}_{\mathsf{H}_{0}}\leq \varepsilon\norm{w}_{\mathsf{H}_{0}},
\end{align}
\end{subequations}
for $u,w\in H_1$, $\norm{u}_{\mathsf{H}_{1}}\leq R$ and $\norm{w}_{\mathsf{H}_{1}}<\delta$.

From conditions \eqref{db} it is clear that for $R>0$, there exists $L>0$ such that
$\norm{B\pa{u}-B\pa{v}}_{\mathsf{H}_{0}}\leq L\norm{u-v}_{\mathsf{H}_{0}}$
for any $u,v\in\mathsf{H}_{1}$ with $\norm{u}_{\mathsf{H}_{1}},\norm{v}_{\mathsf{H}_{1}}\leq R$.
Let $u_{0},\tilde{u}_{0}\in\mathsf{H}_{1}$, $T<\min\set{T^{*}\pa{u_{0}},T^{*}\pa{\tilde{u}_{0}}}$,
$\varepsilon>0$ and $R>0$ such that
\[
\norm{\Phi\pa{t}u_{0}}_{L^{\infty}\pa{[0,T],\mathsf{H}_{1}}},\norm{\Phi\pa{t}\tilde{u}_{0}}_{L^{\infty}\pa{[0,T],\mathsf{H}_{1}}}\leq R
\]
since $\Phi^{A}\pa{t-t'}$ ($\Phi^{A,n}\pa{t,t'}$) is an unitary operator of $\mathsf{H}_{0}$, we deduce that
\[
\norm{\Phi\pa{t}u_0-\Phi\pa{t}\tilde{u}_0}_{\mathsf{H}_{0}}\leq\norm{u_{0}-\tilde{u}_{0}}_{\mathsf{H}_{0}}
+L\displaystyle{\int_{0}^{t}}\norm{\Phi\pa{t'}u_0-\Phi\pa{t'}\tilde{u}_0}_{\mathsf{H}_{0}}dt'.
\]
Therefore, we have the estimate
\begin{align}
\label{eq:lipPhiK}
\norm{\Phi\pa{t}u_{0}-\Phi\pa{t}\tilde{u}_{0}}_{\mathsf{H}_{0}}\leq
e^{Lt}\norm{u_{0}-\tilde{u}_{0}}_{\mathsf{H}_{0}}.
\end{align}

We now define for a fixed $T>0$ the space
$\mathrm{X}_{T}=C\pa{[0,T],\mathsf{H}_{1}}\cap
W^{1,\infty}\pa{[0,T],\mathsf{H}_{0}}$.
Since $B$ is a locally Lipschitz map and conditions \eqref{db} we can see that
$u\mapsto B\circ u$ is a well--defined bounded map in $\mathrm{X}_{T}$ and
$\pa{B\circ u}_{t}=B'\pa{u}u_{t}$.

The following lemma deals with local nonlinearities.

\begin{lemma}[Local nonlinearities]
\label{ex:f}
Let $f:\mathbb{C}\to\mathbb{C}$ be a smooth map in the real sense, (i.e.: if $f=f^{\pa{r}}+if^{\pa{i}}$,
then the map $\pa{\xi,\eta}\mapsto
\pa{f^{\pa{r}}\pa{\xi+i\eta},f^{\pa{i}}\pa{\xi+i\eta}}$ is smooth
on $\R^{2}$). Let also $\mathsf{H}_{1}=H^{s}\pa{\R^{d}}$, with
$s>d/2$, and $\mathsf{H}_{0}=L^{2}\pa{\R^{d}}$. Then
$B:\mathsf{H}_{1} \mapsto \mathsf{H}_{1}$ given by $B(u)=f(u)$ is
a well-defined map, in addition $B':\mathsf{H}_{1} \mapsto
\mathcal{B}\pa{\mathsf{H}_{0}}$ given by $B'(u)(v)=f'(u)v$ is
well-defined and verifies \eqref{db}.
\end{lemma}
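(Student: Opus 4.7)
The plan is to rely on two classical facts about the Sobolev algebra $H^{s}(\mathbb{R}^{d})$ for $s>d/2$: the Sobolev embedding $H^{s}\hookrightarrow L^{\infty}$, and the Moser-type superposition estimate, which says that if $g\in C^{k}(\mathbb{C},\mathbb{C})$ with $g(0)=0$ and $k$ is at least the smallest integer above $s$, then $u\mapsto g\circ u$ maps $H^{s}$ into itself and one has a nonlinear estimate of the form
\[
\|g\circ u\|_{H^{s}}\leq \mathcal{C}\!\left(\|u\|_{L^{\infty}}\right)\|u\|_{H^{s}},
\]
with $\mathcal{C}$ continuous and nondecreasing. Applying this to $g=f$ (resp. to the real and imaginary parts of the partial derivatives of $f$), and combining with the embedding, yields that $B(u)=f(u)\in H^{s}$ whenever $u\in H^{s}$, and that the map is bounded on bounded sets of $\mathsf{H}_{1}=H^{s}$. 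This handles the first assertion.

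For the second assertion, I interpret $B'(u)v=f'(u)v$ as the $\mathbb{R}$-linear Fréchet derivative, which is pointwise multiplication of $v$ by the smooth bounded functions obtained from the partial derivatives of $f$ evaluated at $u$. Since $u\in H^{s}\hookrightarrow L^{\infty}$, we have $f'(u)\in L^{\infty}(\mathbb{R}^{d})$ with
\[
\|f'(u)\|_{L^{\infty}}\leq \sup_{|z|\leq c\|u\|_{H^{s}}}|f'(z)|,
\]
so $v\mapsto f'(u)v$ is bounded on $\mathsf{H}_{0}=L^{2}$ with norm controlled by a continuous function of $\|u\|_{H^{s}}$. This gives \eqref{db}(a) on any ball $\|u\|_{H^{s}}\leq R$.

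For \eqref{db}(b), the key identity is the Taylor remainder
\[
f(u+w)-f(u)-f'(u)w=\int_{0}^{1}\bigl(f'(u+sw)-f'(u)\bigr)w\,ds,
\]
read as an $\mathbb{R}$-linear statement. Taking $L^{2}$ norms and pulling out the sup,
\[
\|B(u+w)-B(u)-B'(u)w\|_{L^{2}}\leq \|w\|_{L^{2}}\sup_{0\leq s\leq 1}\|f'(u+sw)-f'(u)\|_{L^{\infty}}.
\]
Now $\|u+sw\|_{L^{\infty}}+\|u\|_{L^{\infty}}$ stays bounded by a constant depending only on $R$ (for $\|u\|_{H^{s}}\leq R$ and $\|w\|_{H^{s}}<\delta\leq 1$), and $\|sw\|_{L^{\infty}}\leq c\|w\|_{H^{s}}<c\delta$. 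Since $f'$ is uniformly continuous on the compact disk of radius $c(R+1)$, choosing $\delta$ small makes the supremum above smaller than $\varepsilon$, which is exactly \eqref{db}(b).

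The step I expect to be the main obstacle is the Moser-type composition estimate in Step 1, since it is the only genuinely nonlinear ingredient and, for non-integer $s$, requires either a Littlewood-Paley/paraproduct argument or reduction to integer indices by interpolation; the remaining steps are essentially Taylor expansion combined with the Sobolev embedding.
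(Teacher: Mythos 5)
Your proposal is correct and follows essentially the same route as the paper: the paper invokes the Schauder lemma (the Moser-type composition estimate you describe) for the $H^{s}\to H^{s}$ mapping property, bounds $\norm{B'(u)w}_{L^{2}}$ by $C(\norm{u}_{L^{\infty}})\norm{w}_{L^{2}}$ exactly as you do, and obtains \eqref{db}(b) from the pointwise Taylor estimate $\abs{f(u+w)-f(u)-f'(u).w}<\varepsilon\abs{w}$ for $\abs{u}\leq R$, $\abs{w}<\delta$, which is the same uniform-continuity argument you spell out via the integral remainder. The only cosmetic difference is that the paper asserts the pointwise remainder bound directly rather than deriving it from the integral form.
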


\begin{proof}
From Schauder lemma (see Theorem 6.1 in \cite{R}), for
$s>d/2$, it follows that $B: H^{s}\pa{\R^{d}}\mapsto
H^{s}\pa{\R^{d}}$ is a well-defined, locally Lipschitz map.
Taking norm in the identity
\[
f'\pa{u}.w=\pa{f_{\xi}^{\pa{r}}\pa{u}w^{\pa{r}}+f_{\eta}^{\pa{r}}\pa{u}w^{\pa{i}}}+
i\pa{f_{\xi}^{\pa{i}}\pa{u}w^{\pa{r}}+f_{\eta}^{\pa{i}}\pa{u}w^{\pa{i}}},
\]
we obtain $\norm{B'\pa{u}w}_{L^{2}\pa{\R^{d}}}\leq
C\pa{\norm{u}_{L^{\infty}\pa{\R^{d}}}}\norm{w}_{L^{2}\pa{\R^{d}}}$,
with $C\pa{R}=\max\limits_{\abs{u}\leq R}\abs{f'\pa{u}}$. \linebreak Using
$\abs{f\pa{u+w}-f\pa{u}-f'\pa{u}.w}<\varepsilon\abs{w}$ if
$\abs{u}\leq R$ and $\abs{w}<\delta$, we get the required
inequality. This finishes the proof.
\end{proof}


%

In order to add Hartree-type nonlinearities we first collect some
useful estimates.

\begin{lemma}
\label{Le: hartree}
Let $W_1 \in L^{\infty}\pa{\R^d}, W_2 \in
L^{p}\pa{\R^d}$, with $p\geq 2$, $p>d/4$. Let also $u\in
H^{s}\pa{\R^d}$, with $s>d/2$, and $v\in L^2\pa{\R^d}$. Then the following
estimates do hold, with $C$ depending only on $s$:
  \begin{itemize}
    \item[(i)] $\norm{W_1\ast\Re{u^*v}}_{L^\infty\pa{\R^d}} \leq
    \norm{W_1}_{L^\infty\pa{\R^d}} \norm{v}_{L^2\pa{\R^d}}
    \norm{u}_{L^2\pa{\R^d}}$

    \item[(ii)] $\norm{W_2\ast\Re{u^*v}}_{L^\infty\pa{\R^d}} \leq
    C \norm{W_2}_{L^p\pa{\R^d}} \norm{v}_{L^2\pa{\R^d}}
    \norm{u}_{H^s\pa{\R^d}}^\theta \norm{u}_{L^2\pa{\R^d}}^{1-\theta}$

    \item[(iii)] $\norm{W_1\ast|u|^2}_{L^\infty\pa{\R^d}} \leq
    \norm{W_1}_{L^\infty\pa{\R^d}} \norm{u}_{L^2\pa{\R^d}}^2$

    \item[(iv)] $\norm{W_2\ast|u|^2}_{L^\infty\pa{\R^d}} \leq
    C \norm{W_2}_{L^p\pa{\R^d}} \norm{u}_{H^s\pa{\R^d}}^{2\theta} \norm{u}_{L^2\pa{\R^d}}^{2(1-\theta)}$
  \end{itemize}
\end{lemma}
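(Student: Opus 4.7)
The plan is to combine Young's convolution inequality with H\"older's inequality, then use the Sobolev embedding $H^s\pa{\R^d}\hookrightarrow L^\infty\pa{\R^d}$ (which holds because $s>d/2$) together with Lebesgue interpolation to convert higher $L^q$-norms of $u$ into the mixed $H^s$--$L^2$ expressions appearing on the right-hand sides.

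For (i) and (iii) I would use the endpoint Young estimate $\norm{W_1\ast g}_{L^\infty\pa{\R^d}}\leq\norm{W_1}_{L^\infty\pa{\R^d}}\norm{g}_{L^1\pa{\R^d}}$. In (i), taking $g=\Re{u^*v}$, Cauchy--Schwarz gives $\norm{g}_{L^1}\leq\norm{u^*v}_{L^1}\leq\norm{u}_{L^2}\norm{v}_{L^2}$. In (iii), taking $g=|u|^2$, one has $\norm{g}_{L^1}=\norm{u}_{L^2}^2$ directly.

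For (ii) and (iv) I would apply Young with conjugate exponents $(p,p')$, $p'=p/(p-1)$, giving $\norm{W_2\ast g}_{L^\infty}\leq\norm{W_2}_{L^p}\norm{g}_{L^{p'}}$. In (ii), H\"older with $v\in L^2$ then yields $\norm{\Re{u^*v}}_{L^{p'}}\leq\norm{u}_{L^a}\norm{v}_{L^2}$, where $1/a=1/p'-1/2=1/2-1/p$, so that $a\in[2,\infty]$ since $p\geq 2$. In (iv), one simply writes $\norm{|u|^2}_{L^{p'}}=\norm{u}_{L^{2p'}}^2$, with $2p'\in(2,4]$.

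To finish, I interpolate $u$ between $L^2$ and $L^\infty$: $\norm{u}_{L^q}\leq\norm{u}_{L^2}^{1-\theta}\norm{u}_{L^\infty}^\theta$ for $1/q=(1-\theta)/2$, and the Sobolev embedding provides $\norm{u}_{L^\infty}\leq C\norm{u}_{H^s}$ with $C=C(s)$. Choosing $q=a$ gives $\theta=2/p$ and proves (ii); choosing $q=2p'$ (and squaring) gives $\theta=1/p$ and proves (iv). There is no real obstacle here, only bookkeeping of exponents; the hypothesis $s>d/2$ is essential to reach $L^\infty$, while $p\geq 2$ is what keeps the intermediate exponents $a$ and $2p'$ in the admissible range. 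The stronger assumption $p>d/4$ is not needed for this direct chain and presumably plays a role in later applications of the lemma.
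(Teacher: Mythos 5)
Your proof is correct and follows essentially the same route as the paper: Young plus H\"older for (i) and (iii), and for (ii) and (iv) the same chain followed by an interpolation of $\norm{u}_{L^q}$ between $L^2$ and $L^\infty$ via the Sobolev embedding $H^s\hookrightarrow L^\infty$, which is exactly the Gagliardo--Nirenberg step the paper invokes (yielding a particular admissible value of $\theta$, which the lemma does not pin down). Your side remark that $p>d/4$ is not needed once $s>d/2$ is also accurate; that hypothesis matters elsewhere in the well-posedness setup rather than in this estimate.
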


\begin{proof}
  Estimates (i) and (iii) follows immediately from Young and H\"older
  inequalities, while estimates (ii) and (iv) also uses Gagliardo-Nirenberg inequality.
\end{proof}

\begin{lemma}[Hartree-type nonlinearities]
\label{ex: W}
Let $W\in L^{\infty}\pa{\R^d}+L^p\pa{\R^d}$, with
$p\geq 2$, $p>d/4$, let $\mathsf{H}_{1}=H^s\pa{\R^d}$, with
$s>d/2$,  and $\mathsf{H}_{0}=L^2\pa{\R^d}$. Then
$B:\mathsf{H}_{1} \mapsto \mathsf{H}_{1}$, with $B(u)=\pa{W\ast
|u|^2}u$ is a well-defined map, in addition the map
$B':\mathsf{H}_{1} \mapsto \mathcal{B}\pa{\mathsf{H}_{0}}$ given
by $B'(u)(v)=\pa{W\ast \abs{u}^2}v+2\pa{W\ast \Re{u^*v}}u$ is
well-defined and verifies estimate (\ref{db}).
\end{lemma}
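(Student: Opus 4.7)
The plan is to decompose $W = W_1 + W_2$ with $W_1 \in L^\infty\pa{\R^d}$ and $W_2 \in L^p\pa{\R^d}$, so that every convolution appearing in $B$ or $B'$ can be converted to an $L^\infty$ bound via Lemma \ref{Le: hartree}. The remaining multiplicative factors will be controlled using that, for $s > d/2$, the Sobolev space $H^s\pa{\R^d}$ is a Banach algebra continuously embedded in $L^\infty$.

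To show $B(u)\in H^s$ whenever $u\in H^s$, I would first apply Leibniz to $\partial^\alpha\pa{(W*|u|^2)u}$ for $|\alpha|\leq s$, commuting each $\partial^\beta$ through the convolution. Each factor $W*\partial^\beta|u|^2$ expands into a sum of convolutions of products $\partial^\gamma u\,\partial^{\beta-\gamma}\bar u$ whose $L^1$-norm is bounded by $\norm{u}_{H^s}^2$. Young's inequality (for $W_1$) and the H\"older--Gagliardo-Nirenberg argument from Lemma \ref{Le: hartree} (for $W_2$) then give $\norm{W*\partial^\beta|u|^2}_{L^\infty}\leq C\pa{\norm{u}_{H^s}}$, and multiplying by $\partial^{\alpha-\beta}u\in L^2$ leaves each Leibniz summand in $L^2$. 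For $B'(u)\in\mathcal{B}\pa{L^2}$, a direct application of Lemma \ref{Le: hartree}(i)-(iv) to the two summands of $B'(u)v$ produces $\norm{B'(u)v}_{L^2}\leq C\pa{\norm{u}_{H^s}}\norm{v}_{L^2}$, which is the first half of \eqref{db}.

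For the linearization estimate I would start from the algebraic identity obtained by expanding $|u+w|^2 = |u|^2 + 2\Re{u^*w} + |w|^2$,
\[
B(u+w)-B(u)-B'(u)w = 2\pa{W*\Re{u^*w}}w + \pa{W*|w|^2}(u+w).
\]
The first summand, by Lemma \ref{Le: hartree}(i)-(ii) applied with $v=w$, has $L^\infty$ factor $\leq C(R)\norm{w}_{L^2}$, so its $L^2$-norm is $\leq C(R)\norm{w}_{L^2}^2$. The second summand, by (iii)-(iv), has $L^\infty$ factor $\leq C\norm{w}_{H^s}^{2\theta}\norm{w}_{L^2}^{2(1-\theta)}$, giving an $L^2$-norm of order $(R+\norm{w}_{L^2})\norm{w}_{H^s}^{2\theta}\norm{w}_{L^2}^{2(1-\theta)}$. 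The main obstacle I anticipate is to convert these quadratic-in-$w$ bounds into the $\varepsilon\norm{w}_{L^2}$ form demanded by \eqref{db}: this goes through provided the Gagliardo-Nirenberg exponent satisfies $\theta\leq 1/2$, since then $\norm{w}_{L^2}^{2(1-\theta)}=\norm{w}_{L^2}\cdot\norm{w}_{L^2}^{1-2\theta}$ may be majorized by $\norm{w}_{L^2}\cdot\norm{w}_{H^s}^{1-2\theta}$ using the continuous embedding $H^s\hookrightarrow L^2$. The inequality $\theta\leq 1/2$ is exactly what the hypotheses $p\geq 2$ and $s>d/2$ guarantee (through the explicit value $\theta = d/(2sp)$ coming out of Gagliardo-Nirenberg applied to $|u|^2\in L^{p'}$), and choosing $\delta$ small enough depending on $R$ and $\varepsilon$ then yields the second estimate of \eqref{db} and completes the proof.
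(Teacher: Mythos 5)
Your proof is correct and follows the same route as the paper's: expand $B(u+w)-B(u)$ using $\abs{u+w}^2=\abs{u}^2+2\Re{u^*w}+\abs{w}^2$, read off $B'(u)w$ as the part linear in $w$, and bound the quadratic remainder $2\pa{W\ast\Re{u^*w}}w+\pa{W\ast\abs{w}^2}(u+w)$ in $L^2$ via Lemma \ref{Le: hartree}. The paper compresses the remainder estimate into ``follows directly from Lemma \ref{Le: hartree}''; your explicit check that the Gagliardo--Nirenberg exponent satisfies $\theta=d/(2ps)<1/2$ under $p\geq 2$, $s>d/2$ --- which is exactly what lets the quadratic-in-$w$ terms be absorbed into $\varepsilon\norm{w}_{\mathsf{H}_{0}}$ once $\norm{w}_{\mathsf{H}_{1}}<\delta$ --- is precisely the detail being suppressed there, and it is right.
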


\begin{proof}
Since
\begin{align*}
B(u+v)-B(u)=&\pa{W\ast \abs{u}^2}v+2\pa{W\ast \Re{u^*v}}u\\
&+2\pa{W\ast \Re{u^*v}}v+ \pa{W\ast \abs{v}^2} \pa{u+v},
\end{align*}
the linear term is given by $B'(u)(v)=\pa{W\ast \abs{u}^2}v+2\pa{W\ast\Re{u^*v}}u$.
The estimate  \eqref{db} follows directly from Lemma \ref{Le: hartree}.
\end{proof}

\begin{theorem}[Local error]
\label{Th: local convergence}
Let $u_{0}\in \mathsf{H}_{1}$ and $T<T^{*}(u_0)$, then
there exists a constant $C>0$ and $n_{0}\in\mathbb{N}$ such that
for $n\geq n_{0}$, the following estimate holds for the time step
$h_n=T/n$
\[
\norm{\Phi\pa{h_{n}} u_0-\Phi_n \pa{h_{n}}
u_{0}}_{\mathsf{H}_{0}}\leq C h_{n}^{2}.
\]
\end{theorem}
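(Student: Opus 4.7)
The plan is to compare the Duhamel representations of $u(h_n)=\Phi(h_n)u_0$ and $u_n(h_n)=\Phi_n(h_n)u_0$ in the $\mathsf{H}_0$ norm. The crucial preliminary observation is that, since $\langle\alpha\rangle=1$, one has $\tau_n(h_n,0)=\int_0^{h_n}\alpha_n(s)\,ds=h_n$, so $\Phi^{A,n}(h_n,0)=\Phi^A(h_n)$: the linear parts of the two Duhamel formulas cancel exactly, leaving only the nonlinear integral
\[
u(h_n)-u_n(h_n)=-i\int_0^{h_n}\left[\Phi^A(h_n-t')B(u(t'))-\beta_n(t')\Phi^{A,n}(h_n,t')B(u_n(t'))\right]dt'.
\]
This exact matching is what will upgrade the local error from $O(h_n)$ to $O(h_n^2)$.

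Next I would freeze the nonlinearity at $u_0$ and split the integrand into three pieces: (i) a pure splitting error $\bigl[\Phi^A(h_n-t')-\beta_n(t')\Phi^{A,n}(h_n,t')\bigr]B(u_0)$, (ii) a true-trajectory increment $\Phi^A(h_n-t')\bigl[B(u(t'))-B(u_0)\bigr]$, and (iii) an approximate-trajectory increment $\beta_n(t')\Phi^{A,n}(h_n,t')\bigl[B(u_n(t'))-B(u_0)\bigr]$. Parts (ii) and (iii) are handled by the Lipschitz property of $B$ from $\mathsf{H}_1$-bounded sets into $\mathsf{H}_0$, explicitly noted just after \eqref{db}, combined with temporal regularity: by hypothesis $u\in W^{1,\infty}([0,T],\mathsf{H}_0)$, and the analogue for $u_n$ follows by differentiating the non-autonomous equation, where $\alpha_n,\beta_n$ are uniformly bounded by $\|\alpha\|_\infty,\|\beta\|_\infty$ and $u_n$ is uniformly bounded in $\mathsf{H}_1$ for $n\geq n_0$ by Theorem~\ref{Th: convergence on H1}. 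Since $\Phi^A$ and $\Phi^{A,n}$ are unitary on $\mathsf{H}_0$, integrating the pointwise bound $\leq LMt'$ over $[0,h_n]$ yields an $O(h_n^2)$ estimate for each of the two increment terms.

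The main contribution is (i), the splitting error applied to the fixed vector $w:=B(u_0)\in\mathsf{H}_1$, which lies in the domain of the self-adjoint extension of $A$ to $\mathsf{H}_0$ and therefore obeys $\|\Phi^A(\sigma)w-w\|_{\mathsf{H}_0}\leq|\sigma|\,\|Aw\|_{\mathsf{H}_0}$. Using $\int_0^{h_n}\beta_n=h_n$ and the uniform bound $|\tau_n(h_n,t')|\leq\|\alpha\|_\infty h_n$, both integrals $\int_0^{h_n}\Phi^A(h_n-t')w\,dt'$ and $\int_0^{h_n}\beta_n(t')\Phi^{A,n}(h_n,t')w\,dt'$ equal $h_n w$ up to an $O(h_n^2\|Aw\|_{\mathsf{H}_0})$ remainder, and their difference is therefore $O(h_n^2)$.

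The only real (mild) obstacle is the uniform-in-$n$ temporal regularity of $u_n$ required in (iii); this is precisely what Theorem~\ref{Th: convergence on H1} delivers via uniform $\mathsf{H}_1$-boundedness. Apart from that, the argument is routine: the bounded-$B'$ hypothesis enters only through its Lipschitz consequence, and the crucial second-order cancellation is produced entirely by the exact matching $\Phi^{A,n}(h_n,0)=\Phi^A(h_n)$ together with $\int_0^{h_n}\beta_n=h_n$.
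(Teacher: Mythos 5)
Your proof is correct, and while it exploits exactly the same two cancellations as the paper --- $\Phi^{A,n}\pa{h_{n},0}=\Phi^{A}\pa{h_{n}}$ from $\langle\alpha\rangle=1$, and $\int_{0}^{h_{n}}\beta_{n}=h_{n}$ from $\langle\beta\rangle=1$ --- the decomposition is genuinely different. The paper retains the splitting $I_{2,n}+I_{3,n}+I_{4,n}$ from the proof of Theorem \ref{Th: convergence on H1} and applies Lemma \ref{Le: vdif} to each term, choosing integrands $v^{\pa{2}},v^{\pa{3}},v^{\pa{4}}\in\mathrm{X}_{T}$ that either carry a mean-zero weight ($1-\beta_{n}$ for $I_{2,n}$) or vanish at $t=0$; the cost is having to differentiate in time expressions such as $B\pa{u}-B\pa{u_{n}}$, which is where the hypothesis on $B'$ enters and where the paper's treatment of $I_{4,n}$ is terse. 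You instead freeze the nonlinearity at $u_{0}$: the two trajectory increments (ii)--(iii) need only the $\mathsf{H}_{0}$-Lipschitz property of $B$ on $\mathsf{H}_{1}$-balls (the stated consequence of \eqref{db}) together with the uniform $W^{1,\infty}\pa{[0,T],\mathsf{H}_{0}}$ bound on $u$ and $u_{n}$, while the frozen splitting term (i) is handled by the elementary estimate $\norm{\Phi^{A}\pa{\sigma}w-w}_{\mathsf{H}_{0}}\leq\abs{\sigma}\norm{Aw}_{\mathsf{H}_{0}}$ applied to the single vector $w=B\pa{u_{0}}\in\mathsf{H}_{1}\subseteq D$. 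This bypasses Lemma \ref{Le: vdif} entirely and avoids differentiating $B\pa{u}-B\pa{u_{n}}$; what it requires instead --- and what you correctly identify --- is the uniform-in-$n$ $\mathsf{H}_{1}$-bound on $u_{n}$ from Theorem \ref{Th: convergence on H1} (which the paper also uses to define $R$) plus the boundedness of $A$ from $\mathsf{H}_{1}$ to $\mathsf{H}_{0}$, which follows from the standing assumption that $\mathsf{H}_{1}$ is continuously embedded in the domain of the self-adjoint extension of $A$ in $\mathsf{H}_{0}$ (the paper relies on the same fact when it writes $AB\pa{u\pa{t}}$ in $v_{t}^{\pa{2}}$ and $v_{t}^{\pa{3}}$). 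Both routes yield the same constant structure, of the form $C\pa{R}\pa{1+\norm{\alpha}_{L^{\infty}}}\pa{1+\norm{\beta}_{L^{\infty}}}h_{n}^{2}$.
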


\begin{proof}
Replacing $t=h_{n}$ in Eq. \eqref{u-un} and using that $\Phi^{A}\pa{h_{n}}$
are unitary operators, we see that it is sufficient to show the estimates
$\norm{I_{j,n}\pa{h_{n}}}_{\mathsf{H}_{0}}\leq C h_{n}^{2}$, where $I_{j,n}$ are defined as in Theorem \ref{Th: convergence on H1}.
Since $\langle \alpha\rangle=1$, we have
\[
I_{1,n}\pa{h_{n}}=\Phi^{A}\pa{h_{n}}-\Phi^{A,n}\pa{h_{n},0}=
\Phi^{A}\pa{h_{n}}-\Phi^{A}\pa{h_{n}\langle \alpha\rangle}=0.
\]
From Theorem \ref{Th: convergence on H1}, there exists $n_{0}\in\N$ such that for $n\geq n_{0}$ it holds $T^{*}_{n}>T$ and
$\max\limits_{t\in[0,T]}\norm{u_{n}\pa{t}}<\max\limits_{t\in[0,T]}\norm{u\pa{t}}+1=R$.
Setting $v^{\pa{2}}\pa{t}=\Phi^{A}\pa{-t}B\pa{u\pa{t}}$,
it is clear that $v^{\pa{2}}\in\mathrm{X}_{T}$ and
\[
v_{t}^{\pa{2}}\pa{t}=\Phi^{A}\pa{-t}\pa{iAB\pa{u\pa{t}}+{\left(B\pa{u\pa{t}}\right)}_{t}},
\]
from where it follows the estimate $\norm{v_{t}^{\pa{2}}}_{L^{\infty}\pa{[0,h_n],\mathsf{H}_{0}}}\leq C\pa{R}$.

{Using that
\begin{align*}
I_{2,n}\pa{h_{n}}=\int_0^{h_n} \pa{1-\beta_n(s)} v^{\pa{2}}\pa{s} ds
\end{align*}}
and since $\langle 1-\beta\rangle=0$, from Lemma \ref{Le: vdif} we deduce
\begin{align*}
\norm{I_{2,n}\pa{h_{n}}}_{\mathsf{H}_{0}}
\leq C\pa{R}\pa{1+\norm{\beta}_{L^{\infty}}}h_{n}^{2}.
\end{align*}
We set $v^{\pa{3}}\pa{t}=\pa{\Phi^{A}\pa{h_{n}-t}-\Phi^{A,n}\pa{h_{n},t}}B\pa{u\pa{t}}$.
It is clear that $v^{\pa{3}}\in\mathrm{X}_{T}$, $v^{\pa{3}}\pa{0}=0$, and
\begin{align*}
v^{\pa{3}}_{t}\pa{t}=&i\pa{\Phi^{A}\pa{h_{n}-t}-\alpha_{n}\pa{t}\Phi^{A,n}\pa{h_{n},t}}AB\pa{u\pa{t}}\\
&+\pa{\Phi^{A}\pa{h_{n}-t}-\Phi^{A,n}\pa{h_{n},t}}{\pa{B\pa{u\pa{t}}}}_{t}.
\end{align*}
Taking norms, we deduce the estimate $\norm{v^{\pa{3}}_{t}}_{L^{\infty}\pa{[0,h_n],\mathsf{H}_{0}}}\leq
C\pa{R}\pa{1+\norm{\alpha}_{L^{\infty}}}$.
Using Lemma \ref{Le: vdif} again, we obtain
\begin{align*}
\norm{I_{3,n}\pa{h_{n}}}_{\mathsf{H}_{0}}
\leq C\pa{R}\pa{1+\norm{\alpha}_{L^{\infty}}}{\norm{\beta}_{L^{\infty}}}h_{n}^{2}.
\end{align*}
We finally set $v^{\pa{4}}\pa{t}=\Phi^{A,n}\pa{h_{n},t}\pa{B\pa{u\pa{t}}-B\pa{u_{n}\pa{t}}}$.
Since
\begin{align*}
v^{\pa{4}}_{t}\pa{t}=&\,i\alpha_{n}\pa{t}\Phi^{A,n}\pa{h_{n},t}A\pa{B\pa{u\pa{t}}-B\pa{u_{n}\pa{t}}}\\
&+\Phi^{A,n}\pa{h_{n},t}\pa{B\pa{u\pa{t}}-{\pa{B\pa{u_{n}\pa{t}}}}}_{t}
\end{align*}
and $u,u_{n}$ are bounded in $X_{T}$, using a similar argument as in previous cases we deduce
the estimate for $I_{4,n}\pa{h_{n}}$. Theorem is thus proven.
\end{proof}

Under hypotheses of Theorem \ref{Th: local convergence} we formulate the result concerning global error estimate.

\begin{theorem}[Global error]
\label{Th: global convergence}
Let $u_{0}\in \mathsf{H}_{1}$ and $T<T^{*}(u_0)$, then
there exists a constant $C>0$ and $n_{0}\in\mathbb{N}$ such that,
for $n\geq n_{0}:$
\[
\max\limits_{0\leq k \leq n}\norm{\Phi\pa{kh_{n}} u_0-\Phi_n
\pa{kh_{n}} u_{0}}_{\mathsf{H}_{0}}\leq C h_{n}.
\]
\end{theorem}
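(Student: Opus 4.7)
The plan is to convert the $O(h_n^2)$ local bound of Theorem \ref{Th: local convergence} into the $O(h_n)$ global bound by a telescoping (``Lady Windermere's fan'') argument. Set $\bar{u}_j=\Phi(jh_n)u_0$ and $u_j=\Phi_n(jh_n)u_0$ for $0\leq j\leq n$. Since $\alpha,\beta$ are $1$-periodic, the scaled functions $\alpha_n,\beta_n$ are $h_n$-periodic, so the propagator of the non-autonomous equation \eqref{non-autonomous} satisfies $\Phi_n((j+1)h_n,jh_n)=\Phi_n(h_n,0)=\Phi_n(h_n)$, and in particular $u_{j+1}=\Phi_n(h_n)u_j$. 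Using the semigroup property of the exact autonomous flow, I would write the telescoping identity
\[
\bar{u}_k-u_k=\sum_{j=0}^{k-1}\Bigl(\Phi((k-j)h_n)u_j-\Phi((k-j-1)h_n)u_{j+1}\Bigr)=\sum_{j=0}^{k-1}\Phi((k-j-1)h_n)\Bigl(\Phi(h_n)u_j-\Phi_n(h_n)u_j\Bigr).
\]

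Each summand is then estimated by combining two ingredients. First, the $\mathsf{H}_0$-Lipschitz estimate \eqref{eq:lipPhiK} for the exact flow yields $\norm{\Phi(s)v-\Phi(s)w}_{\mathsf{H}_0}\leq e^{LT}\norm{v-w}_{\mathsf{H}_0}$ for $s\in[0,T]$, provided $v$ and $w$ remain in a common $\mathsf{H}_1$-ball on which $B$ is Lipschitz. Second, Theorem \ref{Th: local convergence} applied with initial condition $u_j$ gives $\norm{\Phi(h_n)u_j-\Phi_n(h_n)u_j}_{\mathsf{H}_0}\leq C h_n^2$. Summing over $j=0,\ldots,k-1$ and using $k\leq n=T/h_n$,
\[
\norm{\bar{u}_k-u_k}_{\mathsf{H}_0}\leq k\,e^{LT}C h_n^2\leq T e^{LT}C h_n,
\]
which is the desired global bound.

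The main obstacle is ensuring that the constants $L$ and $C$ can be chosen independently of $j$ and of $n$ (for $n$ sufficiently large). This is where Theorem \ref{Th: convergence on H1} enters: for $n\geq n_0$ it guarantees the uniform bound $\max_{t\in[0,T]}\norm{u_n(t)}_{\mathsf{H}_1}\leq R+1$, where $R=\max_{t\in[0,T]}\norm{u(t)}_{\mathsf{H}_1}$. Hence every $u_j$ lies in a fixed ball of $\mathsf{H}_1$, and by continuous dependence on initial data the short-time trajectories $\Phi(s)u_j$ and $\Phi_n(s)u_j$ stay in a slightly larger fixed ball for $s\in[0,h_n]$. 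Since the constants appearing in Theorem \ref{Th: local convergence} and in \eqref{eq:lipPhiK} depend only on this uniform radius (through the Lipschitz constant of $B$ and the bound on $B'$ from \eqref{db}), they may be taken independent of $j$, which validates the telescoping estimate and completes the argument.
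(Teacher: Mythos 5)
Your strategy is the classical Lady Windermere's fan, whereas the paper runs the equivalent one\-/step recursion: setting $e_{k}=\norm{\Phi\pa{kh_{n}}u_0-\Phi_{n}\pa{kh_{n}}u_0}_{\mathsf{H}_{0}}$, it bounds $e_{k+1}$ by $\norm{\Phi\pa{h_{n}}\Phi\pa{kh_{n}}u_0-\Phi\pa{h_{n}}\Phi_{n}\pa{kh_{n}}u_0}_{\mathsf{H}_{0}}$ plus the local error at the iterate $\Phi_{n}\pa{kh_{n}}u_0$, obtains $e_{k+1}\leq e^{Lh_{n}}e_{k}+Ch_{n}^{2}$ from \eqref{eq:lipPhiK} and Theorem \ref{Th: local convergence}, and sums the geometric series. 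The two arguments are algebraically the same telescoping, but they are not equally easy to justify in this nonlinear setting, and that is where your write-up has a gap. In your sum the $j$-th term is controlled by applying the stability estimate \eqref{eq:lipPhiK} to the exact flow over a time interval of length $\pa{k-j-1}h_{n}$, which can be as large as $T-h_{n}$, with initial data $\Phi\pa{h_{n}}u_{j}$ and $u_{j+1}=\Phi_{n}\pa{h_{n}}u_{j}$ --- points that lie off the exact trajectory. For \eqref{eq:lipPhiK} to apply there you must know that the exact solutions emanating from these numerical iterates exist and remain in a fixed $\mathsf{H}_{1}$-ball on the whole interval $[0,\pa{k-j-1}h_{n}]$, uniformly in $j$ and $n$. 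Your closing paragraph only argues that $\Phi\pa{s}u_{j}$ stays in a fixed ball for $s\in[0,h_{n}]$; that short-time control is exactly what the paper's recursion needs (it only ever propagates the pair $\bigl(\Phi\pa{kh_{n}}u_0,\Phi_{n}\pa{kh_{n}}u_0\bigr)$ by $\Phi\pa{h_{n}}$), but it is not enough for the fan, which needs the long-time statement.

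The gap is fixable: the exact orbit $\set{u\pa{t}:t\in[0,T]}$ is compact in $\mathsf{H}_{1}$, so the $\delta$ in the continuous-dependence statement of the well-posedness proposition can be chosen uniformly along it, and Theorem \ref{Th: convergence on H1} places every $u_{j}$ within $\delta$ of $u\pa{jh_{n}}$ in $\mathsf{H}_{1}$ for $n$ large, whence $\Phi\pa{s}u_{j}$ exists and stays in a ball of radius $R+1$ for all $s\leq T-jh_{n}$. Alternatively, switch to the paper's recursive formulation, which sidesteps the issue entirely. Two minor further points: the identity $\bar u_{k}-u_{k}=\sum_{j}\Phi\pa{\pa{k-j-1}h_{n}}\pa{\Phi\pa{h_{n}}u_{j}-\Phi_{n}\pa{h_{n}}u_{j}}$ abuses notation since $\Phi$ is nonlinear (the Lipschitz estimate you then invoke is of course the intended meaning), and the applicability of Theorem \ref{Th: local convergence} at the iterates $u_{j}$ with a $j$- and $n$-independent constant also rests on the uniform-radius remark you make at the end --- a point the paper itself passes over in silence.
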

\begin{proof}
Setting
$e_{k}=\norm{\Phi\pa{kh_{n}}u_0-\Phi_{n}\pa{kh_{n}}u_0}_{\mathsf{H}_{0}}$,
it follows that
\begin{align*}
e_{k+1}\leq&\norm{\Phi\pa{h_{n}}\Phi\pa{kh_{n}}u_0-
\Phi\pa{h_{n}}\Phi_{n}\pa{kh_{n}}u_0}_{\mathsf{H}_{0}}+\\
&\norm{\Phi\pa{h_{n}}\Phi_{n}\pa{kh_{n}}{u_0}-\Phi_{n}\pa{h_{n}}\pa{\Phi_n\pa{k
h_n}u_0}}_{\mathsf{H}_{0}}.
\end{align*}
Using estimate \eqref{eq:lipPhiK} and Theorem \ref{Th: local
convergence}, we deduce $e_{k+1}\leq e^{L{h_n}}e_{k}+Ch_{n}^{2}$, from
where, by means of an inductive argument, we conclude the
estimate, valid for $0\leq k \leq n$,
\[
e_{k}\leq Ch_{n}^{2}\sum_{j=0}^{k-1}e^{Lj{h_n}}=
\dfrac{Ch_{n}^{2}}{e^{Lh_{n}}-1}\pa{e^{Lkh_{n}}-1}\leq \dfrac{C\pa{e^{LT}-1}}{L}h_{n}.
\]
This finishes the proof.
\end{proof}
\begin{corollary}
\label{Co: interpolation} Let
$\mathsf{H}_{\theta}=[\mathsf{H}_{0},\mathsf{H}_{1}]_{\theta}$ be
the interpolation Hilbert space, $\theta\in\pa{0,1}$ and
$u_0\in\mathsf{H}_{0}$. If $T<T^{*}$ and $\varepsilon>0$, then
there exists $n_{0}\in\N$ such that
\[
\max\limits_{0\leq k \leq n}\norm{\Phi\pa{kh_{n}} u_0-\Phi_n
\pa{kh_{n}}
u_{0}}_{\mathsf{H}_{\theta}}\leq\varepsilon\,h_{n}^{1-\theta},
\]
holds for $n\geq n_0$.
\end{corollary}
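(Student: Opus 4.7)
The plan is to interpolate between the two convergence results already proved: the quantitative rate in $\mathsf{H}_0$ from Theorem \ref{Th: global convergence} and the qualitative (rate-free) uniform convergence in $\mathsf{H}_1$ from Theorem \ref{Th: convergence on H1}. Recall that for the complex interpolation space $\mathsf{H}_{\theta}=[\mathsf{H}_0,\mathsf{H}_1]_{\theta}$ one has the standard interpolation inequality
\[
\norm{w}_{\mathsf{H}_{\theta}}\leq \norm{w}_{\mathsf{H}_{0}}^{1-\theta}\,\norm{w}_{\mathsf{H}_{1}}^{\theta},
\]
valid for every $w\in\mathsf{H}_{1}$. (I assume the intended hypothesis is $u_0\in\mathsf{H}_{1}$, since otherwise $\Phi$ and $\Phi_n$ need not make sense.)

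Writing $e_{n,k}=\Phi(kh_n)u_{0}-\Phi_{n}(kh_n)u_{0}\in\mathsf{H}_{1}$, Theorem \ref{Th: global convergence} supplies a constant $C>0$ and $n_0\in\mathbb{N}$ with
\[
\max_{0\leq k\leq n}\norm{e_{n,k}}_{\mathsf{H}_{0}}\leq C\,h_{n}\qquad(n\geq n_{0}).
\]
On the other hand, Theorem \ref{Th: convergence on H1} gives, since $kh_n\in[0,T]$ for $0\leq k\leq n$,
\[
\delta_{n}:=\max_{0\leq k\leq n}\norm{e_{n,k}}_{\mathsf{H}_{1}}\leq \max_{t\in[0,T]}\norm{\Phi(t)u_{0}-\Phi_{n}(t)u_{0}}_{\mathsf{H}_{1}}\longrightarrow 0.
\]
Plugging both bounds into the interpolation inequality yields
\[
\max_{0\leq k\leq n}\norm{e_{n,k}}_{\mathsf{H}_{\theta}}\leq (Ch_{n})^{1-\theta}\,\delta_{n}^{\theta}= C^{1-\theta}\,\delta_{n}^{\theta}\, h_{n}^{1-\theta}.
\]
Given $\varepsilon>0$, choose $n_1\geq n_0$ so that $C^{1-\theta}\delta_{n}^{\theta}<\varepsilon$ for all $n\geq n_1$; this furnishes the required estimate.

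There is essentially no obstacle beyond invoking the Hilbert-space interpolation inequality; the subtlety worth flagging is conceptual rather than technical, namely that the factor $\delta_n^{\theta}$ carries no explicit rate, which is exactly why the conclusion is phrased in little-$o$ style (bounded above by $\varepsilon h_{n}^{1-\theta}$ for $n$ large) rather than with a fixed constant. Strengthening this to $C h_n^{1-\theta}$ would require an $\mathsf{H}_1$-rate for the splitting error, which is not available under the present hypotheses and would demand a higher-regularity analogue of Theorem \ref{Th: local convergence}.
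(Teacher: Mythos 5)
Your proof is correct and is evidently the intended argument: the paper states this corollary without proof, and the only natural route is exactly the one you take, namely interpolating the $O(h_n)$ bound in $\mathsf{H}_0$ from Theorem \ref{Th: global convergence} against the rate-free uniform convergence in $\mathsf{H}_1$ from Theorem \ref{Th: convergence on H1} via $\norm{w}_{\mathsf{H}_{\theta}}\leq\norm{w}_{\mathsf{H}_{0}}^{1-\theta}\norm{w}_{\mathsf{H}_{1}}^{\theta}$, which also explains the $\varepsilon$-formulation of the conclusion. Your observation that the hypothesis should read $u_0\in\mathsf{H}_{1}$ (a typo in the statement) is also well taken.
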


\begin{remark}
Let $u_0, \tilde{u}_0\in\mathsf{H}_{0}$, and let
$T<\min\set{T^*\pa{u_0},T^*\pa{\tilde{u}_0}}$. Using the notation
and the result of Theorem \ref{Th: global convergence}, and the
estimate \eqref{eq:lipPhiK} we deduce
\begin{align*}
\norm{\Phi\pa{kh_{n}}u_0-\Phi_n \pa{kh_{n}}
\tilde{u}_{0}}_{\mathsf{H}_{0}}\leq&
\norm{\Phi\pa{kh_{n}}u_0-\Phi\pa{kh_{n}}
\tilde{u}_{0}}_{\mathsf{H}_{0}}+\\
&\quad \norm{\Phi\pa{kh_{n}}
\tilde{u}_{0}-\Phi_n \pa{kh_{n}} \tilde{u}_{0}}_{\mathsf{H}_{0}}\\
\leq &e^{LT} \norm{u_0-\tilde{u}_{0}}_{\mathsf{H}_{0}}+C h_n.
\end{align*}
\end{remark}
\subsection{Approximation methods}
\label{sub: approx methods}

Assume we can define an approximation $\widetilde{\Phi}^A$ for the flow $\Phi^A$ such that for any $u\in\mathsf{H}_{1}$,  $\norm{\widetilde{\Phi}^A(t)u}_{\mathsf{H}_{1}}\leq C \norm{u}_{\mathsf{H}_{1}}$
and for any $u_0 \in H_1$ and a small time step $h$,
\begin{equation} \label{cotaphiA}
\norm{{\Phi}^A(h) u_{0}-\widetilde{\Phi}^A(h) u_{0}}_{\mathsf{H}_{0}}\leq C h^2\norm{u_{0}}_{\mathsf{H}_{1}}.\end{equation}
Let $\widetilde{\Phi}_h$ be the flow given by \eqref{phitilde}.
From the identity
$\Phi^{A}\pa{t}=\widetilde{\Phi}^{A}\pa{t}+\left(\Phi^{A}\pa{t}-\widetilde{\Phi}^{A}\pa{t}\right)$, we get the following
decomposition for the discrete flow:
$\Phi_{h}=\widetilde{\Phi}_{h}+N_{h}$, where 
\begin{align*}
N_{h}=\sum_{\stackrel{\gamma\in\set{0,1}^{s}}{\gamma\neq 0}}
\prod_{j=1}^{s}\Phi^{B}\pa{b_{j}h}\circ\left(\Phi^{A}\pa{a_{j}h}-\gamma_{j} \widetilde{\Phi}^A\pa{a_j h}\right).
\end{align*}

\begin{proposition}[Approximation method]
\label{prop: approx-method}
Let $\widetilde{\Phi}^A$ be an approximation of the flow $\Phi^A$ satisfying \eqref{cotaphiA}. Let $u_0\in \mathsf{H}_1$, $T<T^*(u_0)$, then
there exists a constant $C>0$ and $n_{0}\in\mathbb{N}$ such that,
for $n\geq n_{0}:$
\[
\max\limits_{0\leq k \leq n}\norm{\Phi\pa{kh_{n}} u_0-\widetilde{\Phi}_{n}^{k} u_0}_{\mathsf{H}_0}\leq C h_n.
\]
\end{proposition}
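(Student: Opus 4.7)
The plan is to combine Theorem \ref{Th: global convergence} with a local/global error bootstrap for the auxiliary approximation $\widetilde{\Phi}^A\approx\Phi^A$. Since $\alpha_n,\beta_n$ are $h_n$-periodic we have $\Phi_n\pa{kh_n}=\Phi_{h_n}^k$, so the triangle inequality
\[
\norm{\Phi\pa{kh_n}u_0-\widetilde{\Phi}_n^k u_0}_{\mathsf{H}_0}\leq\norm{\Phi\pa{kh_n}u_0-\Phi_{h_n}^k u_0}_{\mathsf{H}_0}+\norm{\Phi_{h_n}^k u_0-\widetilde{\Phi}_{h_n}^k u_0}_{\mathsf{H}_0}
\]
reduces the task, via Theorem \ref{Th: global convergence} applied to the first summand, to controlling the second.

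The next step is a local error estimate $\norm{N_{h_n}v}_{\mathsf{H}_0}\leq Ch_n^2$ for every $v$ in a fixed $\mathsf{H}_1$-ball. Each nonzero $\gamma$ in the expansion of $N_{h_n}$ contributes a composition containing at least one factor $\Phi^A\pa{a_j h_n}-\widetilde{\Phi}^A\pa{a_j h_n}$, which by \eqref{cotaphiA} maps $\mathsf{H}_1$ into $\mathsf{H}_0$ with norm $\mathcal{O}\pa{h_n^2}$. The factors that precede the first such occurrence ($\Phi^A$, unitary on $\mathsf{H}_1$, and $\Phi^B\pa{b_j h_n}$, locally Lipschitz on $\mathsf{H}_1$ with $B(0)=0$) preserve an enlarged $\mathsf{H}_1$-ball, so the error estimate can be invoked; the remaining factors only need to preserve $\mathsf{H}_0$-boundedness, which holds for $\Phi^A$ (unitary) and for $\Phi^B$ (Lipschitz on $\mathsf{H}_0$), and for the further $\Phi^A-\widetilde{\Phi}^A$ factors provided $\widetilde{\Phi}^A$ is bounded on $\mathsf{H}_0$, a natural companion to the stated hypotheses.

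With the local estimate in hand I would run the Lady-Windermere-fan argument already used for Theorem \ref{Th: global convergence}. Setting $\tilde{e}_k=\norm{\Phi_{h_n}^k u_0-\widetilde{\Phi}_{h_n}^k u_0}_{\mathsf{H}_0}$ and splitting
\[
\Phi_{h_n}^{k+1}u_0-\widetilde{\Phi}_{h_n}^{k+1}u_0=\bigl(\Phi_{h_n}(\Phi_{h_n}^k u_0)-\Phi_{h_n}(\widetilde{\Phi}_{h_n}^k u_0)\bigr)+N_{h_n}\widetilde{\Phi}_{h_n}^k u_0,
\]
the Lipschitz bound \eqref{eq:lipPhiK} for $\Phi_n$ gives $\tilde{e}_{k+1}\leq e^{Lh_n}\tilde{e}_k+Ch_n^2$, and iterating exactly as in the proof of Theorem \ref{Th: global convergence} delivers $\tilde{e}_k\leq Ch_n$ for $0\leq k\leq n$, completing the bound after combining with the first summand.

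The main obstacle is keeping $\widetilde{\Phi}_{h_n}^k u_0$ in a fixed $\mathsf{H}_1$-ball for all $k\leq n$, since the local estimate must be applied at every step. I would establish this inductively, by sharpening the standing assumption on $\widetilde{\Phi}^A$ to $\norm{\widetilde{\Phi}^A(h)u}_{\mathsf{H}_1}\leq\pa{1+\mathcal{O}\pa{h}}\norm{u}_{\mathsf{H}_1}$, which together with the one-step $\mathsf{H}_1$-growth of $\Phi^B$ produces a step multiplier $1+\mathcal{O}\pa{h_n}$ and hence a uniform bound $\leq e^{\kappa T}\norm{u_0}_{\mathsf{H}_1}$ after $n$ iterations. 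If needed, the induction on the $\mathsf{H}_1$-norm can be coupled with the one on $\tilde e_k$ and Theorem \ref{Th: convergence on H1}, exploiting that $\Phi_{h_n}^k u_0$ itself stays in a fixed $\mathsf{H}_1$-ball, to close a bootstrap in which no constant depends on $n$.
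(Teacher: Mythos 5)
Your proposal is correct and follows essentially the same route as the paper: a one-step bound $\norm{N_{h}u}_{\mathsf{H}_0}\leq Ch^2\norm{u}_{\mathsf{H}_1}$ obtained from \eqref{cotaphiA} together with the $\mathsf{H}_1$-stability of the $\Phi^B$ and $\Phi^A$ factors, combined via the triangle inequality with the local error of Theorem \ref{Th: local convergence}, and then propagated by the same Lady-Windermere iteration as in Theorem \ref{Th: global convergence}. The one substantive point where you go beyond the paper is the last paragraph: the paper only assumes $\norm{\widetilde{\Phi}^A(t)u}_{\mathsf{H}_1}\leq C\norm{u}_{\mathsf{H}_1}$ and leaves implicit the uniform $\mathsf{H}_1$-bound on the numerical trajectory needed to apply the local estimate at every step (harmless in the spectral case, where $\widetilde{\Phi}^A=\Phi^A\circ P$ has constant $C=1$); your strengthening to a $1+\mathcal{O}(h)$ step multiplier is exactly what is needed to close this in general, so this is a legitimate refinement rather than a deviation.
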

\begin{proof}

Using that $B:\mathsf{H}_1 \to \mathsf{H}_1$ is Lipchitz with constant $L$, then for all $u\in \mathsf{H}_1$ and for all $s$
\[
\norm{\Phi^B(b_sh)u}_{\mathsf{H}_1}\leq e^{L(b_sh)} \norm{u}_{\mathsf{H}_1},
\]
which combined with inequality \eqref{cotaphiA} yields $\norm{N_{h}u_{0}}_{\mathsf{H}_{0}}\leq Ce^{Lh}h^2\norm{u_{0}}_{{\mathsf{H}_{1}}}$.
Using that
\begin{align*}
\norm{\Phi\pa{h}u_{0}-\widetilde{\Phi}_{h}u_{0}}_{\mathsf{H}_{0}}\leq\norm{\Phi\pa{h}u_{0}-\Phi_{h}u_{0}}_{\mathsf{H}_{0}}
+\norm{\Phi_{h}u_{0}-\widetilde{\Phi}_{h}u_{0}}_{\mathsf{H}_{0}},
\end{align*}
and theorem \eqref{Th: local convergence}, we obtain that there exist $n_0$ such that for $n\geq n_0$
\[
\norm{\Phi\pa{h_n}u_{0}-\widetilde{\Phi}_{h_n}u_{0}}_{\mathsf{H}_{0}}\leq
Ch_n^{2},\]
and therefore we deduce the desired inequality.
\end{proof}

\subsection{Spectral methods}
\label{sub: spectral methods}
We then turn to the discretization in space variables. Let $R>0$
be fixed, let $E$ be the projection valued spectral measure of
$A:\mathsf{H}_{1}\subset D\pa{A}\to\mathsf{H}_{0}$, and let
$P=E([-R,R])$ be the orthogonal projection onto the
$A$-invariant subspace $H=P(\mathsf{H}_0)$.
According to previous subsection, we define $\widetilde{\Phi}^A={\Phi}^A \circ P$ and
$\Phi^{A}\pa{t}=\Phi^{A}\pa{t}\pa{P+I-P}=\widetilde{\Phi}^A\pa{t}+\Phi^{A}\pa{t} \pa{I-P}$. We get the following
decomposition for the discrete flow:
$\Phi_{h}=\widetilde{\Phi}_{h}+N_{h}$, where $h>0$ is a small time
step and
\begin{align*}
N_{h}=\sum_{\stackrel{\gamma\in\set{0,1}^{s}}{\gamma\neq 0}}
\prod_{j=1}^{s}\Phi^{B}\pa{b_{j}h}\circ\Phi^{A}\pa{a_{j}h}P^{1-\gamma_{j}}\pa{I-P}^{\gamma_{j}}.
\end{align*}

\begin{theorem}[Spectral approximation]
\label{th: spec-approx}
Let $u_0\in \mathsf{H}_1$, $T<T^*(u_0)$, and $n\in \N$ be given. Then, for $R>h_n^{-2}=(n/T)^2$ is valid the estimate:
  \begin{align*}
   \max\limits_{0\leq k \leq n}\norm{\Phi\pa{k h_n}u_0-\widetilde{\Phi}_{n}^{k} u_0}_{\mathsf{H}_0}\leq C h_n.
  \end{align*}
\end{theorem}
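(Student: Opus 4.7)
The plan is to recognize the spectral discretization as the instance of Proposition~\ref{prop: approx-method} in which $\widetilde{\Phi}^A\pa{t}=\Phi^A\pa{t}\circ P$, and to verify the approximation hypothesis \eqref{cotaphiA} uniformly in $n$ by exploiting the assumption $R>h_n^{-2}$.

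First I would check that $\widetilde{\Phi}^A$ enjoys the $\mathsf{H}_1$-boundedness underlying Proposition~\ref{prop: approx-method}. Since $P=E\pa{[-R,R]}$ is a spectral projection of the self-adjoint extension of $A$, it commutes with $A$ and hence with $\Phi^A\pa{t}$. Being a contraction on $\mathsf{H}_0$ that satisfies $APu=PAu$, $P$ is also bounded on $\mathsf{H}_1$ through the continuous inclusion $\mathsf{H}_1\subseteq D\pa{A}$; combined with the unitarity of $\Phi^A\pa{t}$, this yields $\norm{\widetilde{\Phi}^A\pa{t}u}_{\mathsf{H}_1}\leq C\norm{u}_{\mathsf{H}_1}$.

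The heart of the argument is the quadratic approximation estimate. Using that $\Phi^A\pa{h_n}$ is unitary on $\mathsf{H}_0$ and commutes with $P$,
\[
\norm{\Phi^A\pa{h_n}u_0-\widetilde{\Phi}^A\pa{h_n}u_0}_{\mathsf{H}_0}
=\norm{\Phi^A\pa{h_n}\pa{I-P}u_0}_{\mathsf{H}_0}
=\norm{\pa{I-P}u_0}_{\mathsf{H}_0}.
\]
Since $\pa{I-P}u_0$ has spectral support in $\set{\abs{\lambda}>R}$, the spectral theorem yields
\[
\norm{\pa{I-P}u_0}_{\mathsf{H}_0}^{2}\leq R^{-2}\norm{Au_0}_{\mathsf{H}_0}^{2}\leq C R^{-2}\norm{u_0}_{\mathsf{H}_1}^{2},
\]
and under $R>h_n^{-2}$ this is precisely the estimate \eqref{cotaphiA} with a constant independent of $n$.

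Both hypotheses of Proposition~\ref{prop: approx-method} being verified uniformly in $n$, that result immediately delivers $\max\limits_{0\leq k\leq n}\norm{\Phi\pa{kh_n}u_0-\widetilde{\Phi}_n^{k}u_0}_{\mathsf{H}_0}\leq C h_n$. The point requiring care, and which I regard as the main obstacle if I were to write the argument in full detail, is that here $\widetilde{\Phi}^A$ varies with $n$ through the projection $P=E\pa{[-R,R]}$ (with $R=R\pa{n}>h_n^{-2}$), whereas Proposition~\ref{prop: approx-method} is phrased for a single fixed approximation. Inspection of its proof shows, however, that only the uniform constant in \eqref{cotaphiA} is used, so the same telescoping and Gronwall argument transfers verbatim to this $n$-dependent family.
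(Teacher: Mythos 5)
Your proposal is correct and follows essentially the same route as the paper: the spectral-theorem bound $\norm{(I-P)u}_{\mathsf{H}_0}\leq R^{-1}\norm{u}_{\mathsf{H}_1}$, unitarity of $\Phi^A$, the choice $R\geq h_n^{-2}$ to obtain \eqref{cotaphiA}, and then an appeal to Proposition~\ref{prop: approx-method}. Your additional checks (the $\mathsf{H}_1$-boundedness of $\Phi^A\circ P$ and the remark that $P$ depends on $n$ only through a uniformly controlled constant) are sound elaborations of points the paper leaves implicit.
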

\begin{proof}
For any $u\in \mathsf{H}_1$ we have
\[
\norm{u-Pu}_{\mathsf{H}_{0}}^{2}=\int_{\abs{\lambda}>R}d\pro{u}{E\pa{\lambda}u}_{\mathsf{H}_{0}}
\leq
R^{-2}\int_{\abs{\lambda}>R}\lambda^{2}\,d\pro{u}{E\pa{\lambda}u}_{\mathsf{H}_{0}}
\]
and then $\norm{u-Pu}_{\mathsf{H}_{0}}\leq R^{-1}\norm{u}_{\mathsf{H}_{1}}$.
Being $\Phi^A$ a unitary operator, we get that $\norm{\Phi^A (I-P)u}_{\mathsf{H}_{0}}\leq R^{-1}\norm{u}_{\mathsf{H}_{1}}$.
Taking $R\geq h_n^{-2}$ we get the desired inequality from proposition \eqref{prop: approx-method}.

\end{proof}

When $\pa{A\pm i}^{-1}$ are compact operators, there exists a
basis $\set{\varphi_{j}}_{j\geq 0}\subset D\pa{A}$ of
$\mathsf{H}_{0}$ and a sequence $\set{\lambda_{j}}_{j\geq
0}\subset\R$ with $\abs{\lambda_{j}}\uparrow\infty$ such that
$A\varphi_{j}=\lambda_{j}\varphi_{j}$. The operator
$\Phi^{A}\pa{t}P$ could be written as
\[
\Phi^{A}\pa{t}Pu=\sum_{\abs{\lambda_{j}}\leq R}e^{-i\lambda_{j}t}\pro{\varphi_{j}}{u}_{\mathsf{H}_{0}}\varphi_{j},
\]
which represents the approximate solution of \eqref{A} in
terms of the eigenfunctions (which in most cases are explicitly
given).

\section{Examples}

\subsection{Nonlinear Schr\"odinger equation}

We consider
\begin{align*}
\begin{cases}
iu_{t}+\Delta u+f(\abs{u}^{2})u+\left(W(x)\ast\abs{u}^2\right) u=0,\\
u\pa{0}=u_{0},
\end{cases}
\end{align*}
where $f:\mathbb{C}\to \mathbb{C}$ is smooth as a real function,
and $W(x)$ is an even function such that $W_1 \in
L^{\infty}\pa{\R^d}, W_2 \in L^{p}\pa{\R^d}$, with $p\geq 2$,
$p>d/4$.  Taking $\mathsf{H}_{1}=H^{s}\pa{\R^{d}}$ and
$\mathsf{H}_{0}=L^{2}\pa{\R^{d}}$, with $s>d/2$, $s\geq 2$,  we
can see that $A=-\Delta$ is a self-adjoint operator, and
$B\pa{u}=-f(\abs{u}^{2})u-\left(W(x)\ast\abs{u}^2\right) u$ is a
locally Lipschitz map (see, Lemmas \ref{ex:f} and \ref{Le:
hartree}). Following these lemmas we can also deduce that, for any
$u_0\in \mathsf{H_1}$, and $T<T^*(u_0)$, the solution verifies
$u\in W^{1,\infty}\pa{[0,T],\mathsf{H_0}};$ in addition, the
nonlinearity $B$ satisfies \eqref{db}. We thus obtain Theorem 4.1
of \cite{BBD} for Lie-Trotter splitting schemes. Using
$H^{2\theta}\pa{\R^{d}}\hookrightarrow L^{\infty}\pa{\R^{d}}$
for $\theta>d/4$ and Corollary \ref{Co: interpolation}, we can see
that
$\norm{u\pa{kh}-u_{n}\pa{kh}}_{L^{\infty}\pa{\R^{d}}}=o\pa{h^{1-\theta}}$.

\begin{remark}
  Since, for $d=3$, the Newtonian potential $W(x)=\abs{x}^{-1}$
  verifies the hypotheses of Lemma \ref{Le: hartree}, the
  convergence results are also valid for the 3-D Schr\"odinger-Poisson
  equation:
  \begin{align*}
    \begin{cases}
      iu_t+\Delta u+V u=0\\
      \Delta V=-|u|^2
    \end{cases}
  \end{align*}
\end{remark}

\begin{remark}
In lower dimensions, $d=1,2$, the kernel $W$ is not bounded and
therefore Lemma \ref{Le: hartree} does not apply. Actually, the
existence of dynamics requires some extra work, see \cite{DLR,M}, mainly connected with a suitable
decomposition of the nonlinearity. However, the conclusions of
Theorem \ref{Th: convergence on H1}- \ref{Th: global convergence}
remain valid but their proofs are more involved.
\end{remark}

\subsection{Gross-Pitaevskii equation with a trapping potential}
We consider the $d$-dimensional initial value problem
\begin{align*}
\begin{cases}
iu_{t}+\Delta u-\Omega u-\abs{u}^{2}u=0,\\
u\pa{0}=u_{0},
\end{cases}
\end{align*}
where $\Omega$ is a positive definite quadratic form. Without loss
of generality we can assume
$\Omega\pa{x}=\omega_{1}^{2}x_{1}^{2}+\cdots+\omega_{d}^{2}x_{d}^{2}$.
This equation is used to describe Bose-Einstein condensates. The
operator $A=-\Delta+\Omega$ has a basis of eigenfunctions
(explicitly) given by
\begin{align*}
\varphi_{\mathbf{k}}\pa{x}=\prod_{j=1}^{d}\varphi_{k_{j}}\pa{\omega_{j}x_{j}}
\end{align*}
for $\mathbf{k}=\pa{k_{1},\ldots,k_{d}}\in\N_{0}^{d}$ with
eigenvalues
$\lambda_{\mathbf{k}}=d+2\sum\limits_{j=1}^{d}k_{j}\omega_{j}^{2}$,
where $\varphi_{k}$ is the $k$-th Hermite function.
In \cite{G} the convergence of a split-step method using Hermite
expansion is studied, the Hilbert spaces
$\tilde{H}^{s}\pa{\R^{d}}=D\pa{A^{s/2}}$ are defined as the
functions $u$ in $L^{2}\pa{\R^{d}}$ such that
$\norm{u}_{\tilde{H}^{s}\pa{\R^{d}}}$ is finite, where
\begin{align*}
\norm{u}_{\tilde{H}^{s}\pa{\R^{d}}}^{2}=\sum_{\mathbf{k}\in\N_{0}^{d}}
\lambda_{\mathbf{k}}^{s}\abs{\pro{\varphi_{\mathbf{k}}}{u}_{L^{2}\pa{\R^{d}}}}^{2}.
\end{align*}
Since $A\geq-\Delta$, we see
$\tilde{H}^{2}\pa{\R^{2}}\hookrightarrow H^{2}\pa{\R^{d}}$, in
particular $\tilde{H}^{2}\pa{\R^{d}}\hookrightarrow
L^{\infty}\pa{\R^{d}}$ if $d\leq 3$. In these cases, Lemma 2 in
\cite{G} implies $D\pa{A}=\tilde{H}^{2}\pa{\R^{3}}$ is an
algebra and then $B\pa{u}=\abs{u}^{2}u$ is a locally Lipschitz
map. Using similar arguments as in the proof of Lemma \ref{ex:f},
we get \eqref{db} for the cubic nonlinearity. Therefore, taking
$\mathsf{H}_{1}=\tilde{H}^{2}\pa{\R^{3}}$ and
$\mathsf{H}_{0}=L^{2}\pa{\R^{3}}$, we obtain the convergence
result given by Theorem \ref{Th: global convergence} and like in
the example above
$\norm{u\pa{kh}-u_{n}\pa{kh}}_{L^{\infty}\pa{\R^{d}}}=o\pa{h^{\theta}}$,
for $\theta<1-d/4$.

\begin{lemma}
\label{Le: h2interseccionL22} For any $u\in D\pa{A}$ the following
estimate do hold:
\[
c^{-1}\pro{Au}{Au}_{L^{2}\pa{\R^{d}}}\leq
\norm{-\Delta u}_{L^{2}\pa{\R^{d}}}^{2}+\norm{\Omega u}_{L^{2}\pa{\R^{d}}}^{2}
\leq c
\pro{Au}{Au}_{L^{2}\pa{\R^{d}}}
\]
with $c=\max\set{2,1+2d^{-2}\sum_{j=1}^{d}\omega_{j}^{2}}$.
\end{lemma}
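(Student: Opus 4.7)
The plan is to treat the two inequalities separately. The lower inequality $c^{-1}\pro{Au}{Au}\leq\norm{-\Delta u}^2+\norm{\Omega u}^2$, equivalently $\norm{Au}^2\leq c\pa{\norm{-\Delta u}^2+\norm{\Omega u}^2}$, is immediate from the trivial $\norm{a+b}^2\leq 2\pa{\norm{a}^2+\norm{b}^2}$ applied to $a=-\Delta u$, $b=\Omega u$, so any $c\geq 2$ works. This is exactly what forces the $2$ inside the $\max$ in the definition of $c$, and handles the left half of the claimed estimate.

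For the upper inequality $\norm{-\Delta u}^2+\norm{\Omega u}^2\leq c\norm{Au}^2$, I would first expand
\[
\norm{Au}^2=\norm{-\Delta u}^2+\norm{\Omega u}^2+2\,\Re{\pro{-\Delta u}{\Omega u}}
\]
and evaluate the cross term by integration by parts. Since $\Omega$ is real and smooth, $\pro{-\Delta u}{\Omega u}=\int\Omega\abs{\nabla u}^2\,dx+\int\pa{\nabla\Omega}\cdot\nabla u\,\bar u\,dx$; taking real parts, using $\Re{\bar u\,\partial_j u}=\tfrac12\partial_j\abs{u}^2$, and integrating by parts once more (so that $\int x_j\partial_j\abs{u}^2\,dx=-\norm{u}^2$), I obtain
\[
\Re{\pro{-\Delta u}{\Omega u}}=\int\Omega\abs{\nabla u}^2\,dx-\sigma\norm{u}^2,\qquad \sigma:=\sum_{j=1}^{d}\omega_j^2.
\]
Plugging this back into the expansion and dropping the non-negative term $2\int\Omega\abs{\nabla u}^2\,dx$ in the favourable direction gives
\[
\norm{-\Delta u}^2+\norm{\Omega u}^2\leq\norm{Au}^2+2\sigma\norm{u}^2,
\]
so it remains to control $\norm{u}^2$ by a multiple of $\norm{Au}^2$ with constant of order $1/d^2$ so as to recover $c=1+2\sigma/d^2$.

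My preferred tool for this last step is the one-dimensional Heisenberg uncertainty principle, applied in each coordinate: $\norm{u}^2\leq 2\norm{\partial_j u}\norm{x_j u}$. Summing over $j$ and applying AM-GM produces $d\norm{u}^2\leq\norm{\nabla u}^2+\sum_j\norm{x_j u}^2$, and then Cauchy-Schwarz in the forms $\norm{\nabla u}^2=\pro{-\Delta u}{u}\leq\norm{-\Delta u}\norm{u}$ and $\sum_j\omega_j^2\norm{x_j u}^2=\pro{\Omega u}{u}\leq\norm{\Omega u}\norm{u}$ yields a bound of the shape $d^2\norm{u}^2\lesssim\norm{-\Delta u}^2+\norm{\Omega u}^2$, from which the factor $2\sigma/d^2$ in $c$ emerges upon substitution into the previous display. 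I expect the \emph{main obstacle} to be calibrating this Heisenberg/Cauchy-Schwarz chain so that the constant is \emph{exactly} $1+2\sigma/d^2$: the naive weighted variant produces a ground-state-type denominator $\pa{\sum_j\omega_j}^2$ rather than $d^2$, and the two only agree when the $\omega_j$ are comparable. In the general case one may have to retain the non-negative $2\int\Omega\abs{\nabla u}^2\,dx$ discarded above, or pick weights in the Heisenberg step that keep the $\omega_j$'s out of the final denominator; it is precisely in the remaining regime that the $\max$ with $2$ in the definition of $c$ absorbs any loss in the Heisenberg constant.
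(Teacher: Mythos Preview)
Your treatment of the lower inequality and your reduction of the upper inequality to
\[
\norm{-\Delta u}_{L^{2}}^{2}+\norm{\Omega u}_{L^{2}}^{2}\leq\pro{Au}{Au}_{L^{2}}+2\sigma\norm{u}_{L^{2}}^{2},
\qquad \sigma=\sum_{j=1}^{d}\omega_{j}^{2},
\]
are exactly what the paper does: expand $\pro{Au}{Au}$, integrate by parts in the cross term, and drop the non-negative contribution $2\int\Omega\abs{\nabla u}^{2}$. So up to this point you and the paper coincide.

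The gap is in how you close the last step. You need $\norm{u}_{L^{2}}^{2}\leq d^{-2}\pro{Au}{Au}_{L^{2}}$, and you try to manufacture this from Heisenberg's inequality. As you yourself notice, that route produces $(\sum_{j}\omega_{j})^{2}$ in the denominator, not $d^{2}$, and there is no way to trade one for the other in general. Your closing remark that ``the $\max$ with $2$ absorbs any loss'' is not correct: the $2$ in $c=\max\set{2,1+2\sigma d^{-2}}$ comes from the \emph{lower} inequality and plays no role whatsoever in the upper one; if the upper bound required a constant larger than $1+2\sigma d^{-2}$, the lemma as stated would simply be false.

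The paper bypasses all of this with a one-line spectral argument. Since $A=-\Delta+\Omega$ is the $d$-dimensional harmonic oscillator whose lowest eigenvalue, according to the eigenvalue formula $\lambda_{\mathbf{k}}=d+2\sum_{j}k_{j}\omega_{j}^{2}$ stated just above the lemma, equals $d$, one has $A\geq d$ and hence $\pro{Au}{Au}_{L^{2}}\geq d^{2}\norm{u}_{L^{2}}^{2}$ directly. Substituting this into the displayed reduction gives the upper inequality with constant $1+2\sigma d^{-2}$ immediately, no uncertainty principle needed. That spectral lower bound is the missing idea in your proposal.
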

\begin{proof}
Since $\mathcal{S}\pa{\R^{d}}$ is dense in $D\pa{A}$, we just have to prove
the norm equivalence for any Schwartz function
\begin{align*}
\pro{Au}{Au}_{L^{2}\pa{\R^{d}}}=&
\norm{-\Delta u}_{L^{2}\pa{\R^{d}}}^{2}+\norm{\Omega u}_{L^{2}\pa{\R^{d}}}^{2}
-2\pro{\Delta u}{\Omega u}_{L^{2}\pa{\R^{d}}}
\end{align*}
Using $\pro{\Delta u}{\Omega u}_{L^{2}\pa{\R^{d}}}
=-\pro{\nabla\Omega.\nabla u}{u}-\pro{\Omega\nabla u}{\nabla u}$, we get
\begin{align*}
2\pro{\Delta u}{\Omega u}_{L^{2}\pa{\R^{d}}}\leq&
-2\pro{\nabla\Omega.\nabla u}{u}_{L^{2}\pa{\R^{d}}}
=\pro{\Delta\Omega\,u}{u}_{L^{2}\pa{\R^{d}}}\\
=&2\sum_{j=1}^{d}\omega_{j}^{2}\norm{u}_{L^{2}\pa{\R^{d}}}^{2}.
\end{align*}
Since $\pro{Au}{Au}_{L^{2}\pa{\R^{d}}}\geq d^{2}\norm{u}_{L^{2}\pa{\R^{d}}}^{2}$,
we have
\begin{align*}
\norm{-\Delta u}_{L^{2}\pa{\R^{d}}}^{2}+\norm{\Omega u}_{L^{2}\pa{\R^{d}}}^{2}
\leq\pa{1+2d^{-2}\textstyle{\sum_{j=1}^{d}}\omega_{j}^{2}}
\pro{Au}{Au}_{L^{2}\pa{\R^{d}}}.
\end{align*}
From $2\pro{\Delta u}{\Omega u}_{L^{2}\pa{\R^{d}}}\leq
\norm{-\Delta u}_{L^{2}\pa{\R^{d}}}^{2}
+\norm{\Omega u}_{L^{2}\pa{\R^{d}}}^{2}$, we obtain
\[
\pro{Au}{Au}_{L^{2}\pa{\R^{d}}}\leq 2\norm{-\Delta u}_{L^{2}\pa{\R^{d}}}^{2}
+2\norm{\Omega u}_{L^{2}\pa{\R^{d}}}^{2}
\]
and then, the lemma follows.
\end{proof}

\begin{corollary}
For $d\leq 3$, $\tilde{H}^{2}\pa{\R^{d}}$ is an algebra with the
pointwise product.
\end{corollary}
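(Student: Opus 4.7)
The plan is to exploit Lemma \ref{Le: h2interseccionL22} to identify $\tilde{H}^{2}(\R^d)$ concretely as $H^{2}(\R^d)\cap\{u:\Omega u\in L^2(\R^d)\}$ equipped with an equivalent norm, and then to check the algebra property in two separate pieces: the usual Sobolev algebra property of $H^2$ for low dimensions, and the preservation of the weighted condition $\Omega(uv)\in L^2$.

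\textbf{Step 1 (reduction via Lemma \ref{Le: h2interseccionL22}).} By the norm equivalence just established, a function $u\in L^{2}(\R^d)$ belongs to $\tilde{H}^{2}(\R^d)=D(A)$ if and only if both $\Delta u\in L^{2}(\R^d)$ and $\Omega u\in L^{2}(\R^d)$. Hence it is enough to show that, given $u,v\in H^{2}(\R^d)$ with $\Omega u,\Omega v\in L^{2}(\R^d)$, the product $uv$ satisfies $uv\in H^{2}(\R^d)$ and $\Omega(uv)\in L^{2}(\R^d)$.

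\textbf{Step 2 ($H^2$ algebra property for $d\le 3$).} I would use the identity
\[
\Delta(uv)=(\Delta u)v+2\nabla u\cdot\nabla v+u(\Delta v)
\]
and control each summand in $L^2$ by Hölder combined with the Sobolev embeddings valid in dimensions $d\le 3$. Specifically, $H^{2}(\R^d)\hookrightarrow L^{\infty}(\R^d)$ handles $(\Delta u)v$ and $u(\Delta v)$, while $H^{1}(\R^d)\hookrightarrow L^{4}(\R^d)$ (valid for $d\le 4$) handles the mixed term via $\|\nabla u\cdot\nabla v\|_{L^2}\le\|\nabla u\|_{L^4}\|\nabla v\|_{L^4}$. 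Together with the trivial bound $\|uv\|_{L^2}\le\|u\|_{L^\infty}\|v\|_{L^2}$, this shows $uv\in H^{2}(\R^d)$ with a bilinear estimate in the $H^2$ norms.

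\textbf{Step 3 (preservation of the weight).} Since $\Omega(x)$ multiplies pointwise, one simply writes $\Omega(uv)=(\Omega u)v$. Because $\Omega u\in L^{2}(\R^d)$ by hypothesis and $v\in L^{\infty}(\R^d)$ by the same Sobolev embedding as above, this product lies in $L^{2}(\R^d)$, with the estimate $\|\Omega(uv)\|_{L^2}\le\|\Omega u\|_{L^2}\|v\|_{L^\infty}$. Combining Steps 2 and 3 with Lemma \ref{Le: h2interseccionL22} yields $uv\in\tilde{H}^{2}(\R^d)$ together with a bilinear estimate of the form $\|uv\|_{\tilde{H}^{2}}\lesssim\|u\|_{\tilde{H}^{2}}\|v\|_{\tilde{H}^{2}}$, which is exactly the algebra property.

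The only delicate point is the cross term $\nabla u\cdot\nabla v$ in Step 2: this is precisely where the hypothesis $d\le 3$ enters, since for $d\ge 4$ the embedding $H^{1}\hookrightarrow L^{4}$ fails and a direct proof of the algebra property of $H^{2}$ breaks down. Everything else is just a bookkeeping consequence of Lemma \ref{Le: h2interseccionL22} and standard Sobolev/Hölder inequalities.
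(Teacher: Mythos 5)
Your proof is correct and follows essentially the same route as the paper: the Leibniz expansion of $\Delta(uv)$ with the cross term controlled via $L^{4}$ bounds on the gradients, the estimate $\norm{\Omega (uv)}_{L^{2}\pa{\R^{d}}}\leq\norm{\Omega u}_{L^{2}\pa{\R^{d}}}\norm{v}_{L^{\infty}\pa{\R^{d}}}$, and the norm equivalence of Lemma \ref{Le: h2interseccionL22} to reassemble the pieces. One small correction to your closing remark: the restriction $d\leq 3$ is actually forced by the embedding $H^{2}\pa{\R^{d}}\hookrightarrow L^{\infty}\pa{\R^{d}}$, which fails at $d=4$, and not by $H^{1}\hookrightarrow L^{4}$, which still holds in dimension $4$; the paper obtains the $L^{4}$ gradient bound by Gagliardo--Nirenberg interpolation rather than the Sobolev embedding, but that is the same estimate in substance.
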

\begin{proof}
From the estimate $\norm{\Omega uv}_{L^{2}\pa{\R^{d}}}\leq
\norm{\Omega u}_{L^{2}\pa{\R^{d}}}
\norm{v}_{L^{\infty}\pa{\R^{d}}}$ and the embedding
$\tilde{H}^{2}\pa{\R^{d}}\hookrightarrow L^{\infty}\pa{\R^{d}}$, we obtain
$\norm{\Omega uv}_{L^{2}\pa{\R^{d}}}\leq C
\norm{u}_{\tilde{H}^{2}\pa{\R^{d}}}
\norm{v}_{\tilde{H}^{2}\pa{\R^{d}}}$.
Using $-\Delta\pa{uv}=-\Delta u\,v-u\Delta v-2\nabla u.\nabla v$, we have
\begin{align*}
\norm{-\Delta\pa{uv}}_{L^{2}\pa{\R^{d}}}\leq &
\norm{-\Delta u}_{L^{2}\pa{\R^{d}}}
\norm{v}_{L^{\infty}\pa{\R^{d}}}+
\norm{-\Delta v}_{L^{2}\pa{\R^{d}}}
\norm{u}_{L^{\infty}\pa{\R^{d}}}\\
&+2\norm{\nabla u}_{L^{4}\pa{\R^{d}}}\norm{\nabla v}_{L^{4}\pa{\R^{d}}}.
\end{align*}
Since
\begin{equation}
\label{embL4}
\begin{split}
\norm{\nabla u}_{L^{4}\pa{\R^{d}}}^{2}\leq &
C\norm{u}_{L^{2}\pa{\R^{d}}}^{\pa{4-d}/4}
\norm{-\Delta u}_{L^{2}\pa{\R^{d}}}^{\pa{4+d}/4}\\
\leq & C\pa{\norm{u}_{L^{2}\pa{\R^{d}}}^{2}+
\norm{-\Delta u}_{L^{2}\pa{\R^{d}}}^{2}}
\leq C\norm{u}_{\tilde{H}^{2}\pa{\R^{d}}}^{2},
\end{split}
\end{equation}
we get $\norm{-\Delta\pa{uv}}_{L^{2}\pa{\R^{d}}}\leq
C\norm{u}_{\tilde{H}^{2}\pa{\R^{d}}}
\norm{v}_{\tilde{H}^{2}\pa{\R^{d}}}$ and
\[
\norm{uv}_{\tilde{H}^{2}\pa{\R^{d}}}\leq C
\norm{u}_{\tilde{H}^{2}\pa{\R^{d}}}
\norm{v}_{\tilde{H}^{2}\pa{\R^{d}}},
\]
this finishes the proof.
\end{proof}

\begin{proposition}
Let $f$ be as in example \ref{ex:f} and $d\leq 3$, then
the map $u\mapsto f\pa{u}$ is bounded and locally Lipschitz on
$\tilde{H}^{2}\pa{\R^{d}}$.
\end{proposition}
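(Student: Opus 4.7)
The plan is to mimic the algebra proof just given, reducing everything to bounding $\|{-}\Delta f(u)\|_{L^2}$ and $\|\Omega f(u)\|_{L^2}$ separately and then invoking Lemma \ref{Le: h2interseccionL22}. I first note that the hypotheses of example \ref{ex:f} effectively force $f(0)=0$ (otherwise $f(u)\not\in L^2$); writing $f\pa{u}=\int_0^1 f'\pa{tu}\cdot u\,dt$ (in the real sense), this yields the pointwise bound $\abs{f\pa{u}}\leq C\pa{\norm{u}_{L^{\infty}}}\abs{u}$, and consequently
\[
\norm{\Omega f\pa{u}}_{L^{2}\pa{\R^{d}}}\leq C\pa{\norm{u}_{L^{\infty}}}\norm{\Omega u}_{L^{2}\pa{\R^{d}}},
\]
where the embedding $\tilde{H}^{2}\pa{\R^{d}}\hookrightarrow L^{\infty}\pa{\R^{d}}$ turns $\norm{u}_{L^{\infty}}$ into a locally controlled quantity on $\tilde{H}^{2}\pa{\R^{d}}$.

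For the Laplacian term, the real chain rule gives, schematically, $\Delta f\pa{u}=f'\pa{u}\cdot\Delta u+Q\pa{f''\pa{u};\nabla u,\nabla u)$, where $f'\pa{u}\cdot\Delta u$ denotes the real-linear action already appearing in Lemma \ref{ex:f}, and the second term is a quadratic form in $\nabla u$ with coefficients of the form $f^{(r/i)}_{\xi\xi},f^{(r/i)}_{\xi\eta},f^{(r/i)}_{\eta\eta}$ evaluated at $u$. The first piece is controlled by $C\pa{\norm{u}_{L^{\infty}}}\norm{-\Delta u}_{L^{2}\pa{\R^{d}}}$, and the second by
\[
C\pa{\norm{u}_{L^{\infty}}}\norm{\nabla u}_{L^{4}\pa{\R^{d}}}^{2}\leq C\pa{\norm{u}_{L^{\infty}}}\norm{u}_{\tilde{H}^{2}\pa{\R^{d}}}^{2},
\]
using the Gagliardo--Nirenberg estimate \eqref{embL4}, which is valid precisely because $d\leq 3$. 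Combining with the $\Omega$-bound and applying Lemma \ref{Le: h2interseccionL22} yields the boundedness of $u\mapsto f\pa{u}$ on bounded sets of $\tilde{H}^{2}\pa{\R^{d}}$.

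The Lipschitz property follows the same pattern applied to the identity $f\pa{u}-f\pa{v}=\int_{0}^{1}f'\pa{su+\pa{1-s}v}\cdot\pa{u-v}\,ds$. Differentiating under the integral and grouping terms produces expressions in which either $u-v$ or $\nabla\pa{u-v}$ appears linearly, multiplied by coefficients that are continuous functions of $u,v$ (hence uniformly bounded on bounded sets of $\tilde{H}^{2}\pa{\R^{d}}$, again by the embedding into $L^{\infty}$) times at most one factor of the type $\nabla u$ or $\nabla v$ in $L^{4}\pa{\R^{d}}$. Using \eqref{embL4} once more gives
\[
\norm{f\pa{u}-f\pa{v}}_{\tilde{H}^{2}\pa{\R^{d}}}\leq C\pa{R}\norm{u-v}_{\tilde{H}^{2}\pa{\R^{d}}}
\]
whenever $\norm{u}_{\tilde{H}^{2}\pa{\R^{d}}},\norm{v}_{\tilde{H}^{2}\pa{\R^{d}}}\leq R$.

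The main obstacle is the bookkeeping of the second-order chain rule in the real (non-holomorphic) sense: $f''\pa{u}$ is a symmetric bilinear form on $\R^{2}$ whose four real components must be tracked through the computation of $\Delta f\pa{u}$ and through the differencing step. All resulting terms, however, share the common structure ``bounded coefficient $\times$ quadratic expression in $\nabla u,\nabla v,\nabla\pa{u-v}$'', which is exactly what the $L^{4}$-embedding \eqref{embL4} is tailored to absorb; the restriction $d\leq 3$ is used only here.
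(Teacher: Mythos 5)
Your proof is correct and follows essentially the same route as the paper's: the same split into $\norm{-\Delta f\pa{u}}_{L^{2}}$ and $\norm{\Omega f\pa{u}}_{L^{2}}$ via Lemma \ref{Le: h2interseccionL22}, the same chain-rule identity $\Delta f\pa{u}=f''\pa{u}\abs{\nabla u}^{2}+f'\pa{u}\Delta u$ with the quadratic gradient term absorbed by \eqref{embL4} (the only place $d\leq 3$ enters), and the same mean-value integral for the Lipschitz estimate. Your explicit remarks that $f\pa{0}=0$ is implicitly required and that $f''$ must be tracked as a real bilinear form are only slightly more careful than the paper's write-up, not a different argument.
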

\begin{proof}
Let $R>0$ such that $\norm{u}_{L^{\infty}\pa{\R^{d}}}\leq R$,
since $\abs{f\pa{u}}\leq C\abs{u}$ if $\abs{u}\leq R$, we have
$\norm{\Omega f\pa{u}}_{L^{2}\pa{\R^{d}}}\leq
C\norm{\Omega u}_{L^{2}\pa{\R^{d}}}$.
Using that
$\Delta f\pa{u}=f''\pa{u}\abs{\nabla u}^{2}+f'\pa{u}\Delta u$,
we obtain
\begin{align*}
\norm{-\Delta f\pa{u}}_{L^{2}\pa{\R^{d}}}^{2}
+\norm{\Omega f\pa{u}}_{L^{2}\pa{\R^{d}}}^{2}\leq&C\left(
\norm{-\Delta u}_{L^{2}\pa{\R^{d}}}^{2}\right.\\
&\left.+\norm{\Omega u}_{L^{2}\pa{\R^{d}}}^{2}
+\norm{\nabla u}_{L^{4}\pa{\R^{d}}}^{2}\right),
\end{align*}
from \eqref{embL4} and Lemma \ref{Le: h2interseccionL22} we have
\begin{align*}
\pro{Af\pa{u}}{Af\pa{u}}_{L^{2}\pa{\R^{d}}}\leq & C
\norm{-\Delta f\pa{u}}_{L^{2}\pa{\R^{d}}}^{2}
+\norm{\Omega f\pa{u}}_{L^{2}\pa{\R^{d}}}^{2}\\
\leq &
C\pa{\norm{u}_{L^{2}\pa{\R^{d}}}^{2}+
\norm{-\Delta u}_{L^{2}\pa{\R^{d}}}^{2}}
\leq C\pro{Au}{Au}_{L^{2}\pa{\R^{d}}}.
\end{align*}
Let $u,v\in\tilde{H}^{2}\pa{\R^{d}}$ such that
$\norm{u}_{\tilde{H}^{2}\pa{\R^{d}}},\norm{u}_{\tilde{H}^{2}\pa{\R^{d}}}\leq R$, then
\begin{align*}
\norm{f\pa{u}-f\pa{v}}_{\tilde{H}^{2}\pa{\R^{d}}}\leq&
\int_{0}^{1}\norm{f'\pa{\pa{1-t}u+tv}}_{\tilde{H}^{2}\pa{\R^{d}}}
\norm{u-v}_{\tilde{H}^{2}\pa{\R^{d}}}dt\\
\leq & C\norm{u-v}_{\tilde{H}^{2}\pa{\R^{d}}},
\end{align*}
which expresses that $f$ is a locally Lipschitz map.
\end{proof}

Using similar arguments as those used in the proof of Lemma
\ref{ex:f}, we can see that the nonlinear local term given by
$B\pa{u}=f(\abs{u}^{2})u$ verifies \eqref{db} and then the
conclusion of Theorem \ref{Th: global convergence} holds.

\subsection{Nonlinear wave interaction model}
Consider the system of evolution equations modelling wave-wave
interaction in quadratic nonlinear media (see \cite{BDDR} and references therein). This model describes the
nonlinear and nonlocal cross-interaction of two waves in $1+1$
dimensions. The interaction is described by nonlocal (integral)
expressions:
\begin{align*}
\begin{cases}
u^{\pa{1}}_{t}-u^{\pa{1}}_{x}+\nu\,g\,u^{\pa{2}}=0,\\
u^{\pa{2}}_{t}+u^{\pa{2}}_{x}-\nu\,g^{*}\,u^{\pa{1}}=0,\\
u^{\pa{1}}\pa{0}=u^{\pa{1}}_{0},\, u^{\pa{2}}\pa{0}=u^{\pa{2}}_{0},
\end{cases}
\end{align*}
where $\nu=\pm1$ and $g_{x}=u^{\pa{2}*}\,u^{\pa{1}}$, $g\pa{x}\to 0$ when $x\to-\infty$.
Consider the spaces $\mathsf{H}_{1}=H^{1}\pa{\R}\times H^{1}\pa{\R}$, $\mathsf{H}_{0}=L^{2}\pa{\R}\times L^{2}\pa{\R}$ and
the operator $A=i\partial_{x}\sigma_{z}$.
Define $B\pa{u}=\nu g\pa{u}\sigma_{y}.u$, with
\[
g\pa{u}\pa{x,t}=\int_{-\infty}^{x}u^{\pa{2}*}\pa{y,t}u^{\pa{1}}\pa{y,t}dy
\]
and $\sigma_{y},\sigma_{z}$ the Pauli matrices. Taking
\[
\pa{g'\pa{u}w}\pa{x,t}=\int_{-\infty}^{x}\pa{w^{\pa{2}*}\pa{y,t}u^{\pa{1}}\pa{y,t}+
u^{\pa{2}*}\pa{y,t}w^{\pa{1}}\pa{y,t}}dy,
\]
we can see that $B'\pa{u}w=\nu g'\pa{u}w\,\sigma_{y}.u+\nu
g\pa{u}\sigma_{y}.w$. From Cauchy inequality, we get
$\norm{g'\pa{u}w}_{L^{\infty}\pa{\R}}\leq\norm{u}_{L^{2}\pa{\R}}\norm{w}_{L^{2}\pa{\R}}$.
From the expression of $B'\pa{u}w$, we conclude
$\norm{B'\pa{u}w}_{L^{2}\pa{\R}}\leq
C\norm{u}_{L^{2}\pa{\R}}^{2}\norm{w}_{L^{2}\pa{\R}}$. Then,
\eqref{db} is verified and therefore the conclusions of Theorem
\ref{Th: local convergence} and Theorem \ref{Th: global
convergence} are valid.

As an application of these results, we study the behavior of
solutions with compact support. If
$\mathrm{supp}\pa{u_{0}}\subset\pa{a,b}$, since $A$ is a first
order linear wave equations and it holds
$\mathrm{supp}\pa{B\pa{u}}\subset\mathrm{supp}\pa{u}$, it follows
that $\mathrm{supp}\pa{\Phi^{A}\pa{t}u_{0}}\subset\pa{a-t,b+t}$
and $\mathrm{supp}\pa{\Phi^{B}\pa{t}u}\subset\mathrm{supp}\pa{u}$.
Therefore, $\mathrm{supp}\pa{u_{n}\pa{t}}\subset\pa{a-t,b+t}$
which implies $\mathrm{supp}\pa{u\pa{t}}\subset\pa{a-t,b+t}$.

\section{Numerical example}

Consider de Schr\"odinger--Poisson equation in $\mathbb{T}$, i.e. $u$ is a $1$--periodic solution of
\begin{align}
\label{eq: S-P periodic}
\begin{cases}
iu_{t}+u_{xx}+\abs{u}^{2}u+Vu=0,\\
V_{xx}=\mathcal{D}-\abs{u}^{2},\\
u\pa{0}=u_{0},
\end{cases}
\end{align}
where $\mathcal{D}\in C^{\infty}\pa{\mathbb{T}}$ is a given real--valued function. We assume that neutrality condition is verified:
\[
 \int_{\mathbb{T}}\mathcal{D}\pa{x}dx=\norm{u_{0}}_{L^{2}\pa{\mathbb{T}}}^{2},
\]
since $\norm{u\pa{t}}_{L^{2}\pa{\mathbb{T}}}^{2}$ is a conserved quantity, this condition holds for any $t$.
The potential $V$ can be calculated by $V=-G*\varrho$, where $\varrho=\mathcal{D}-\abs{u}^{2}$ and $G$ is the Green potential defined as the
$1$--periodic function such that $G\pa{x}=x\pa{1-x}/2$ on $[0,1]$.
We consider $\mathsf{H}_{0}=L^{2}\pa{\mathbb{T}}$, $\mathsf{H}_{1}=H^{2}\pa{\mathbb{T}}$,
defining the self--adjoint operator $A=-\partial_{xx}$ and
\[
B\pa{u}=-\abs{u}^{2}u+\pa{G*\varrho}u,
\]
we can write \eqref{eq: S-P periodic} in the form \eqref{ec-evol} and from Lemma \ref{Le: hartree}, $B$ verifies \eqref{db}.

The linear flow $\Phi^{A}$ can be written as ${\pa{\Phi^{A}\pa{t}u}}{(x)}=\sum_{p\in\Z}\hat{u}_{p}e^{-i4\pi^{2}p^{2}t}e^{i2\pi px}$, where
\[
 \hat{u}_{p}=\int_{\mathbb{T}}u\pa{x}e^{{-}i2\pi px}dx.
\]
Let $w$ be the solution of \eqref{eq: flow B} with $w\pa{0}=u$, using $V$ is a real--valued potential,
we can see that $\Re{w^{*}w_{t}}=0$,
which implies $\abs{w}=\abs{u}$ and then $V$ is constant in $t$.
Therefore $\Phi^{B}\pa{t}u=e^{it\pa{V+\abs{u}^{2}}}u$, where $V$ is calculated using $u$.
Observe that if $\varrho=\mathcal{D}-\abs{u}^{2}$, then it holds $\hat{\varrho}_{0}=0$ and the potential can be expanded by
$V\pa{x}=-\sum_{p\in\Z}\hat{\varrho}_{p}\pa{2\pi p}^{-2}e^{i2\pi px}$.


\subsection{Solving by Discrete Fourier Transform}

We show a numerical method using discrete Fourier coefficients. Let $m$ be the odd integer $m=2l+1$
and consider $\pa{I_{m}u}\pa{x}=\sum\limits_{p=-{l}}^{{l}}\hat{U}_{p}e^{i2\pi px},$ where
$\hat{U}_{p}$ is the discrete Fourier coefficient given by
\[
\hat{U}_{p}=\frac{1}{m}\sum_{q=0}^{m-1}U_{q}e^{-i2\pi pq/m}
\]
and $U_{q}=u\pa{q/m}$.
Since $e^{-i2\pi pq/m}=e^{-i2\pi q\pa{p\pm m}/m}$, we have $\hat{U}_{p}=\hat{U}_{p\pm m}$. We also know that
\[
U_{q}=\sum_{p=0}^{m-1}\hat{U}_{p}e^{i2\pi pq/m}.
\]
It is known that $\norm{u-I_{m}u}_{L^{2}\pa{\mathbb{T}}}\leq C m^{-2}\norm{u}_{H^{2}\pa{\mathbb{T}}}$ (see Lemma 2.2 in \cite{T}) and then we have
\begin{proposition}
Let $\Phi_{m}^{A}\pa{t}=\Phi^{{A}}\pa{t}I_{m}$, for any $u\in H^{2}\pa{\mathbb{T}}$ it is verified
\begin{align*}
\norm{\Phi^{A}\pa{t}u-\Phi_{m}^{A}\pa{t}u}_{L^{2}\pa{\mathbb{T}}}\leq C m^{-2}\norm{u}_{H^{2}\pa{\mathbb{T}}}.
\end{align*}
\end{proposition}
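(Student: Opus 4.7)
The plan is to reduce the estimate to a standard interpolation error bound by exploiting the linearity and the $L^2$-isometry property of $\Phi^A(t)$. Since $\Phi^A(t)$ is the unitary group generated by $-iA = i\partial_{xx}$, it is linear and acts as an isometry on $L^2(\mathbb{T})$; in particular, for any $v \in L^2(\mathbb{T})$ we have $\norm{\Phi^A(t) v}_{L^2(\mathbb{T})} = \norm{v}_{L^2(\mathbb{T})}$.

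The first step is to use the definition $\Phi_m^A(t) = \Phi^A(t) I_m$ and linearity to write
\[
\Phi^A(t) u - \Phi_m^A(t) u = \Phi^A(t)\pa{u - I_m u}.
\]
Taking the $L^2(\mathbb{T})$ norm and applying unitarity of $\Phi^A(t)$ yields
\[
\norm{\Phi^A(t) u - \Phi_m^A(t) u}_{L^2(\mathbb{T})} = \norm{u - I_m u}_{L^2(\mathbb{T})}.
\]
Then I invoke the interpolation error estimate quoted just before the statement (Lemma 2.2 in \cite{T}), namely $\norm{u - I_m u}_{L^2(\mathbb{T})} \leq C m^{-2} \norm{u}_{H^2(\mathbb{T})}$, which gives the conclusion directly, with a constant $C$ that is independent of $t$.

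There is essentially no obstacle here: the whole content of the proposition is that the time-evolution does not amplify the spectral interpolation error, which is precisely the statement of $L^2$-unitarity of $\Phi^A(t)$. The only thing to verify carefully is that $\Phi^A(t)$ is indeed an isometry on $L^2(\mathbb{T})$, which is immediate from its Fourier representation $\widehat{\Phi^A(t) u}_p = e^{-i 4\pi^2 p^2 t}\hat u_p$, since each mode is multiplied by a unit complex number so that Parseval's identity preserves the $L^2$ norm.
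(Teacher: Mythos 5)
Your proof is correct and follows exactly the route the paper intends: the proposition is stated immediately after quoting the interpolation bound $\norm{u-I_{m}u}_{L^{2}(\mathbb{T})}\leq Cm^{-2}\norm{u}_{H^{2}(\mathbb{T})}$, and the only remaining ingredient is the $L^{2}$-unitarity of $\Phi^{A}(t)$, which you verify via the Fourier multiplier representation. Nothing is missing.
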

\noindent
We can see $\Phi_{m}^{A}\pa{t}$ is an approximation of the flow $\Phi^A$ that verifies inequality \eqref{cotaphiA} in subsection \ref{sub: approx methods} for $m\geq n$.
From definition of $\Phi_{m}^{A}\pa{t}$ and $\hat{U}_{p}=\hat{U}_{p\pm m}$, it holds
\begin{align*}
\pa{\Phi_{m}^{A}\pa{t}u}\pa{q/m}=&\,\sum_{p=-l}^{l}\hat{U}_{p}e^{-i4\pi^{2}p^{2}t}e^{i2\pi pq/m}\\
=&\,\sum_{p=l+1}^{m-1}\hat{U}_{p}e^{-i4\pi^{2}\pa{m-p}^{2}t}e^{i2\pi pq/m}
+\sum_{p=0}^{l}\hat{U}_{p}e^{-i4\pi^{2}p^{2}t}e^{i2\pi pq/m}\\
=&\,\sum_{p=0}^{m-1}\hat{U}_{p}e^{-i\lambda_{p}t}e^{i2\pi pq/m},
\end{align*}
where $\lambda_{p}=4m^{2}\pi^{2}h\pa{p/m}$ for $0\leq p\leq m-1$ and $h\pa{\nu}=\nu^{2}-2\pa{\nu-1/2}_{+}$.

The solution of \eqref{eq: flow B} can be exactly calculated as
\[
\pa{\Phi^{B}\pa{t}u}\pa{q/m}=e^{it\pa{V_{q}+N_{q}}}U_{q},
\]
where $N_{q}=\abs{U_{q}}^{2}$and the potential $V$ is given by
\[
V_{q}=-\sum_{p=1}^{m-1}\hat{\varrho}_{p}\lambda_{p}^{-2}e^{i2\pi pq/m},
\]
with $\hat{\varrho}_{p}=\hat{\mathcal{D}}_{p}-\hat{N}_{p}$. Observe that
the neutrality condition reads as
$\hat{\varrho}_{0}=\hat{\mathcal{D}}_{0}-\hat{N}_{0}=0$.
Therefore, the Lie--Trotter algorithm can be written as:
\texttt{\begin{itemize}
\item[-] Fix $n$.
\item[-] Asign $h=T/n$.
\item[-] Fix $m\sim h^{-1}$.
\item[-] Transform $D$ to $\hat{D}$ using FFT.
\item[-]  Compute $\lambda^{-2}$.
\item[-] Compute $\exp\pa{-i\lambda h}$.
\item[-] Evaluate $U=u_{0}\pa{q/m}$ for $q=0,\ldots,m-1$.
\item[-] For $k=1,\ldots,n$ do
\begin{enumerate}[1.]
\item Transform $U$ to $\hat{U}$ using FFT $\pa{m\times\log\pa{m}\texttt{ ops}}$.
\item Multiple $\hat{U}$ by $\exp\pa{-i\lambda h}$ $\pa{m\texttt{ ops}}$.
\item Obtain $U^{\pa{A}}$ anti-transforming FFT $e^{-i\lambda h}.\hat{U}$ $\pa{m\times\log\pa{m}\texttt{ ops}}$.
\item Compute $N=\abs{U^{\pa{A}}}^{2}$ $\pa{m\texttt{ ops}}$.
\item Transform $N$ to $\hat{N}$ using FFT $\pa{m\times\log\pa{m}\texttt{ ops}}$.
\item Compute $\hat{\varrho}$ substracting $\hat{N}$ from $\hat{D}$.
\item Multiple $\hat{\varrho}$ by $\lambda^{-2}$ $\pa{m\texttt{ ops}}$.
\item Obtain $V$ anti-transforming FFT $-\lambda^{-2}.\hat{\varrho}$ $\pa{m\times\log\pa{m}\texttt{ ops}}$.
\item Sum $N$ and $V$.
\item Evaluate $\exp\pa{ih\pa{V+N}}$ $\pa{b\times m\texttt{ ops}}$.
\item Obtain $U$ multiplying $\exp\pa{ih\pa{V+N}}.U^{\pa{A}}$ $\pa{m\texttt{ ops}}$.
\item Asign $U[k]=U$.
\end{enumerate}
\end{itemize}
}
\noindent
The computational cost is proportional to $n\times m\times\log\pa{m}$.


To illustrate Theorem \eqref{Th: global convergence} we present a numerical experiment in one space dimension.
We use the algorithm described above to discretize the Schr\"odinger--Poisson  equation \eqref{eq: S-P periodic} with initial data
$u_0(x)=\sin^{\frac{3}{2}+\alpha}(\pi x)$ with $\alpha >0$ small so that $u_0\in H^2$ but $u_0 \notin H^{2+s}$ for $s>\alpha$,
and $\mathcal{D}\pa{x}=\gamma\pa{\alpha}\pa{1+\pa{1+16 \pi^{2}}} \cos\pa{4 \pi  x}$, with
\[
\gamma\pa{\alpha}=\frac{\Gamma (\alpha +2)}{\sqrt{\pi }\,\Gamma
   \left(\alpha +\frac{5}{2}\right)}.
\]
Figure $1$ shows the order dependence of the $L^\infty$ error at time $T = 1$ on the time step-size h.
The calculations are performed with a space discretization of $2 \times 10^5+1$
and compared to the result with a time step-size $h = \frac{10^{-5}}{2}$.
\begin{figure}[!h]\includegraphics[width=9cm, height=6cm]{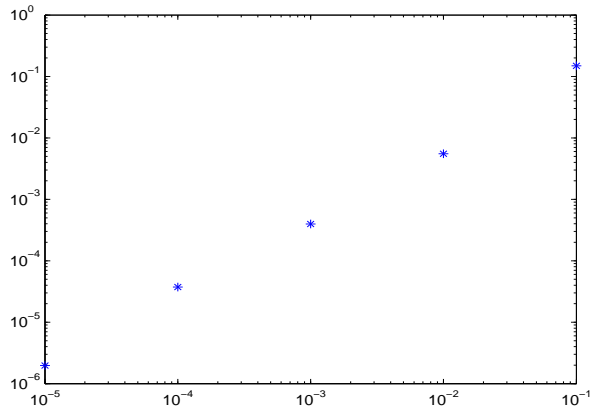}
\caption{Discretization error for different time step}
\end{figure}

Figure 2 illustrates the dependence of the $L^\infty$ error on the space discretization
parameter $n$. Here, we use a fixed time step-size $h=10^{-3}$ and compare
the results with the result for $n = 2^{14}+1$.

\begin{figure}[!h] \includegraphics[width=9cm, height=6cm]{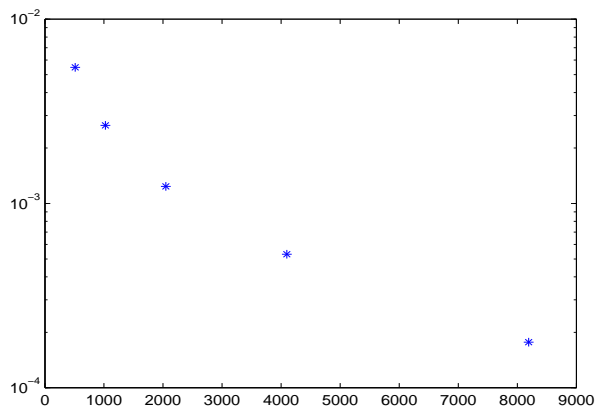}
\caption{Discretization error for different space discretization}
\end{figure}


\section*{Acknowledgment}This work has been supported in part by PIP11420090100165, CONICET and
MATH-Amsud 11MATH-02, IMPA/CAPES (Brazil)--MINCYT (Argentina)--CNRS/INRIA (France)


\end{document}